\newtheorem{intthm}{Theorem}[]
\newcommand{\numberseries}{\bfseries}   
\newlength{\thmtopspace}                
\newlength{\thmbotspace}                
\newlength{\thmheadspace}             
\newlength{\thmindent}                     
\newtheoremstyle{bfupright head,slanted body}
                {\thmtopspace}{\thmbotspace}
                {\slshape}{\thmindent}{\bfseries}{.}{\thmheadspace}
                {{\numberseries \thmnumber{#2\;}}\thmnote{#3}}
\newtheoremstyle{bfupright head,upright body}
                {\thmtopspace}{\thmbotspace}
                {\upshape}{\thmindent}{\bfseries}{.}{\thmheadspace}
                {{\numberseries \thmnumber{#2\;}}\thmnote{#3}}
\newtheoremstyle{fixed bf head,slanted body}
                {\thmtopspace}{\thmbotspace}{\slshape}
                {\thmindent}{\bfseries}{.}{\thmheadspace}
                {{\numberseries \thmnumber{#2\;}}\thmname{#1}\thmnote{ (#3)}}
\newtheoremstyle{fixed bf head,upright body}
                {\thmtopspace}{\thmbotspace}{\upshape}
                {\thmindent}{\bfseries}{.}{\thmheadspace}
                {{\numberseries \thmnumber{#2\;}}\thmname{#1}\thmnote{ (#3)}}
\newtheoremstyle{numbered paragraph}
                {\thmtopspace}{\thmbotspace}{\upshape}
                {\thmindent}{\upshape}{}{\thmheadspace}
                {{\numberseries \thmnumber{#2.}}}
\theoremstyle{bfupright head,slanted body}
\newtheorem{res}{}[section]             \newtheorem*{res*}{}
\theoremstyle{bfupright head,upright body}
\newtheorem{bfhpg}[res]{}               \newtheorem*{bfhpg*}{}
\theoremstyle{fixed bf head,slanted body}
\newtheorem{thm}[res]{Theorem}          \newtheorem*{thm*}{Theorem}
\newtheorem{prp}[res]{Proposition}      \newtheorem*{prp*}{Proposition}
\newtheorem{cor}[res]{Corollary}        \newtheorem*{cor*}{Corollary}
\newtheorem{lem}[res]{Lemma}            \newtheorem*{lem*}{Lemma}
\theoremstyle{fixed bf head,upright body}
\newtheorem{dfn}[res]{Definition}       \newtheorem*{dfn*}{Definition}
\newtheorem{rmk}[res]{Remark}           \newtheorem*{rmk*}{Remark}
          \newtheorem*{exa*}{Example}
            \newtheorem*{setup*}{Setup}
\theoremstyle{numbered paragraph}
\newtheorem{ipg}[res]{}
\newlength{\thmlistleft}        
\newlength{\thmlistright}       
\newlength{\thmlistpartopsep}   
\newlength{\thmlisttopsep}      
\newlength{\thmlistparsep}      
\newlength{\thmlistitemsep}     
\newcounter{eqc}
\newenvironment{eqc}{\begin{list}{\upshape (\textit{\roman{eqc}})}%
    {\usecounter{eqc}%
      \setlength{\leftmargin}{\thmlistleft}%
      \setlength{\labelwidth}{\thmlistleft}%
      \setlength{\rightmargin}{\thmlistright}%
      \setlength{\partopsep}{\thmlistpartopsep}%
      \setlength{\topsep}{\thmlisttopsep}%
      \setlength{\parsep}{\thmlistparsep}%
      \setlength{\itemsep}{\thmlistitemsep}}}%
  {\end{list}}%
\newcommand{\eqclbl}[1]{{\upshape(\textit{#1})}}
\newcounter{prt}
\newenvironment{prt}{\begin{list}{\upshape (\alph{prt})}%
    {\usecounter{prt}%
      \setlength{\leftmargin}{\thmlistleft}%
      \setlength{\labelwidth}{\thmlistleft}%
      \setlength{\rightmargin}{\thmlistright}%
      \setlength{\partopsep}{\thmlistpartopsep}%
      \setlength{\topsep}{\thmlisttopsep}%
      \setlength{\parsep}{\thmlistparsep}%
      \setlength{\itemsep}{\thmlistitemsep}}}%
  {\end{list}}%
\newcounter{rqm}
  {\end{list}}%
\newenvironment{itemlist}{\nopagebreak \begin{list}{$\bullet$}%
    {\setlength{\leftmargin}{1.5em}%
      \setlength{\labelwidth}{\thmlistleft}%
      \setlength{\rightmargin}{\thmlistright}%
      \setlength{\partopsep}{\thmlistpartopsep}%
      \setlength{\topsep}{\thmlisttopsep}%
      \setlength{\parsep}{\thmlistparsep}%
      \setlength{\itemsep}{\thmlistitemsep}}}%
  {\end{list}}%
  \newcommand{\proofofimp}[3][:]{\mbox{\eqclbl{#2}$\!\implies\!$\eqclbl{#3}#1}}
\newenvironment{prf*}[1][Proof]{%
	\begin{proof}[\bf #1]
		\setcounter{equation}{0}
		}
	{\end{proof}
}
\def\@nobreak@#1{\mathchoice%
  {\nobreakdef@\displaystyle\f@size{#1}}%
  {\nobreakdef@\nobreakstyle\tf@size{\firstchoice@false #1}}%
  {\nobreakdef@\nobreakstyle\sf@size{\firstchoice@false #1}}%
  {\nobreakdef@\nobreakstyle\ssf@size{\firstchoice@false #1}}%
  \check@mathfonts}%
\def\nobreakdef@#1#2#3{\hbox{{%
                    \everymath{#1}%
                    \let\f@size#2\selectfont%
                    #3}}}%
\def\widebardisplay#1{%
  \setbox0=\hbox{$\displaystyle #1$}
  \dimen0=\wd0%
  \advance\dimen0 by -2pt
  \vbox{%
    \nointerlineskip%
    \moveright 1pt 
    \vbox{\hrule width \dimen0}%
    \nointerlineskip%
    \kern 2pt
    \box0%
    }%
  }
\def\widebartext#1{%
  \setbox0=\hbox{$#1$}
  \dimen0=\wd0%
  \advance\dimen0 by -2pt
  \vbox{%
    \nointerlineskip%
    \moveright 1pt 
    \vbox{\hrule width \dimen0}%
    \nointerlineskip%
    \kern 1.6pt
    \box0%
    }%
  }
\def\widebarscript#1{%
  \setbox0=\hbox{$\scriptstyle #1$}
  \dimen0=\wd0%
  \advance\dimen0 by -3pt
  \vbox{%
    \nointerlineskip%
    \moveright 0.5pt 
    \vbox{\hrule width \dimen0}%
    \nointerlineskip%
    \kern .8pt
    \box0%
    }%
  }
\def\widebarscriptscript#1{%
  \setbox0=\hbox{$\scriptscriptstyle #1$}
  \dimen0=\wd0%
  \advance\dimen0 by -2pt
  \vbox{%
    \nointerlineskip%
    \moveright 1pt 
    \vbox{\hrule width \dimen0}%
    \nointerlineskip%
    \kern .6pt
    \box0%
    }%
  }
\def\widebar#1{\mathchoice%
  {\widebardisplay{#1}}%
  {\widebartext{#1}}%
  {\widebarscript{#1}}%
  {\widebarscriptscript{#1}}%
  }
\DeclareFontFamily{U}{mathx}{\hyphenchar\font45}
\DeclareFontShape{U}{mathx}{m}{n}{ <5> <6> <7> <8> <9> <10> <10.95>
  <12> <14.4> <17.28> <20.74> <24.88> mathx10 }{}
\DeclareSymbolFont{mathx}{U}{mathx}{m}{n}
\DeclareMathAccent{\widecheck}{0}{mathx}{"71}
\DeclareMathAccent{\wideparen}{0}{mathx}{"75}
\def\urltilda{\kern -.15em\lower .7ex\hbox{\~{}}\kern .04em}
\renewcommand{\ge}{\geqslant}
\renewcommand{\Im}[1]{\nobreak{\operatorname{Im}#1}}
\newcommand{\ZZ}{\mathbb{Z}}
\newcommand{\deq}{\:=\:}
\newcommand{\dis}{\:\is\:}
\newcommand{\is}{\cong}
\newcommand{\qis}{\simeq}
\newcommand{\xra}[2][]{\xrightarrow[#1]{\;#2\;}}
\newcommand{\qra}{\xra{\smash{\qis}}}
\newcommand{\Ker}[1]{\nobreak{\operatorname{Ker}#1}}
\newcommand{\Coker}[1]{\nobreak{\operatorname{Coker}#1}}
\newcommand{\HH}[2][]{\operatorname{H}^{#1}(#2)}
\newcommand{\HL}[2][]{\operatorname{H}_{#1}(#2)}
\newcommand{\Hom}[3][\A]{\operatorname{Hom}_{#1}(#2,#3)}
\newcommand{\Ext}[4][\A]{\operatorname{Ext}_{#1}^{#2}(#3,#4)}
\newcommand{\Text}[4][R]{\smash{\widecheck{\operatorname{Ext}}}^%
  {#2}_{{#1}^{\phantom{|\mspace{-6mu}}}}(#3,#4)}
\newcommand{\Cext}[4][R]{\smash{\operatorname{\widehat{Ext}}%
  }_{#1}^{#2^{\phantom{|}\mspace{-6mu}}}(#3,#4)}
\newcommand{\Wext}[4][R]{\smash{\operatorname{\widetilde{Ext}}%
  }_{#1}^{#2^{\phantom{|}\mspace{-6mu}}}(#3,#4)}
\newcommand{\suspc}{\mathsf{\Theta}_{\C}}
\newcommand{\sa}{\operatorname{S}}
\newcommand{\F}{\operatorname{F}}
\newcommand{\G}{\operatorname{G}}
\newcommand{\z}{\sf{Z}}
\newcommand{\D}{\sf{W}}
\newcommand{\C}{\sf{V}}
\newcommand{\x}{\sf{X}}
\newcommand{\y}{\sf{Y}}
\newcommand{\A}{\sf{A}}
\newcommand{\cst}[1]{\nobreak{\mathbin{\overline{#1}}^{\C}}}
\newcommand{\dst}[1]{\nobreak{\mathbin{\underline{#1}}^{\D}}}
\newcommand{\CHom}[3][\A]{\overline{\operatorname{Hom}}^{\C}_{#1}(#2,#3)}
\newcommand{\DHom}[3][\A]{\underline{\operatorname{Hom}}^{\D}_{#1}(#2,#3)}
\newcommand{\bHom}[3][\A]{\widebar{\operatorname{Hom}}_{#1}(#2,#3)}
\newcommand{\sHom}[3][\A]{\widetilde{\operatorname{Hom}}_{#1}(#2,#3)}
\newcommand{\bExt}[4][\A]{\widebar{\operatorname{Ext}}_{#1}^{#2}(#3,#4)}
\newcommand{\id}[2][R]{\operatorname{id}_{#1}#2}
\newcommand{\Z}[2][]{\operatorname{Z}^{#1}(#2)}
\newcommand{\B}[2][]{\operatorname{B}^{#1}(#2)}
\begin{document}

\title[Stable functors and cohomology theory in abelian categories]{%
  Stable functors and cohomology theory \\in abelian categories}

\author[S. Guo]{Shoutao Guo}

\address{Shoutao Guo: Department of Mathematics, Lanzhou Jiaotong University, Lanzhou 730070, China}

\email{guoshoutao9022@163.com}

\author[L. Liang]{Li Liang$^{\ast}$}

\address{Li Liang: Department of Mathematics, Lanzhou Jiaotong University, Lanzhou 730070, China}

\email{lliangnju@gmail.com}

\urladdr{https://sites.google.com/site/lliangnju}

\thanks{$^{\ast}$ Corresponding author.}



\keywords{Stable functor, complete/Tate cohomology, special preenveloping/precovering subcategory.}

\subjclass[2010]{18G25; 18G10; 18G15.}

\begin{abstract}
In this paper, we first introduce stable functors with respect to a preenveloping/precovering subcategory and investigate some of their properties. Using that we then introduce and study a relative complete cohomology theory in abelian categories. Some properties of the cohomology including vanishing are given. As applications, we give some characterizations of objects of finite homological dimensions including the flat dimension, cotorsion dimension, Gorenstein injective/flat dimension and projectively coresolved Gorenstein flat dimension.
\end{abstract}

\maketitle

\thispagestyle{empty} \allowdisplaybreaks

\section*{Introduction}

\noindent
Tate cohomology was initially defined for representations of finite groups. Avramov
and Martsinkovsky \cite{AM} extended the definition so that it can work well for finitely generated modules of finite Gorenstein dimension over a noetherian ring. Sather-Wagstaff, Sharif and White \cite{SSW-10a} further investigated Tate cohomology for objects in abelian categories with enough projectives and injectives.
As a broad generalization of Tate cohomology to the realm of infinite group algebras or even associative rings, complete cohomology was introduced by Vogel and Goichot \cite{GF92}, Mislin \cite{GM94} and Benson and Carlson \cite{BC92} independently, and was further treated by Avramov and Veliche \cite{AL07} and Nucinkis \cite{BN98}. The main purpose of this paper is to introduce and study a relative complete cohomology theory in abelian categories. Much of our motivation comes from the theory on stabilization of functors developed by Martsinkovsky and Russell recently in \cite{MJ19,MJ21,MJ20}, which is very useful for studying complete homology theory.

The definition of projective/injective stabilization of functors was first given by Auslander and Bridger in \cite{AB69}. It is known that the Hom modulo projectives is actually the projective stabilization of the covariant $\mathrm{Hom}$ functor, which plays an important role in the field of representation theory. The applications of injective stabilization of covariant functors are displayed in the complete homology theory \cite{MJ20} and in the (co)torsion theory \cite{MJ21}. In this paper we introduce a relative version of stabilization of functors with respect to a preenveloping/precovering subcategory to extend the complete cohomology theory.

Let $\A$ be an abelian category, and $\C$ a special preenveloping subcategory of $\A$. For a contravariant additive functor $\F$ from $\A$ to the category $\sf Ab$ of abelian groups, the $\C$-stable functor $\cst{\F}$ of $\F$ is defined as the cokernel of the natural transformation $\rho: \mathrm{L}_0^{\C}\F \to \F$; see \ref{stable functor}. Here $\mathrm{L}_0^{\C}\F$ is the $0$th left derived functor of $\F$ with respect to $\C$. Let $M$ and $N$ be objects in $\A$. As a special case where $\F=\Hom{-}{N}$, we denote $\cst{\F}(M)$ by $\CHom{M}{N}$. Then there is an equality $\CHom{M}{N}=\Hom{M}{N}/{\C}\hspace{-0.5mm}\Hom{M}{N}$; see Proposition \ref{stable hom}. Here ${\C}\hspace{-0.5mm}\Hom{M}{N}$ is the subgroup of $\Hom{M}{N}$ consisting of those morphisms $f: M\to N$ that factor through an object in $\C$. In Section \ref{sec.2}, we mainly discuss the exactness of $\C$-stable functors. The following result is from Corollary \ref{left exactness}, Lemma \ref{prp1} and Proposition \ref{prp2}.
\begin{intthm}\label{thmA}
Let $\F$ be a contravariant additive functor from $\A$ to $\sf Ab$, and $\C$ a special preenveloping subcategory of $\A$. If\: $\F$ is half\: $\Hom{-}{\C}$-exact, then so is $\cst{\F}$, and the following statements are equivalent.
\begin{eqc}
\item $\cst{\F}$ is right $\Hom{-}{\C}$-exact.
\item $\cst{\F}=0$.
\item $\F$ is right $\Hom{-}{\C}$-exact.
\end{eqc}

Moreover, if\: $\F$ is left $\Hom{-}{\C}$-exact, then the following statements are equivalent.
\begin{eqc}
\item $\cst{\F}$ is left $\Hom{-}{\C}$-exact.
\item $\cst{\F}(\suspc^{i} M) = 0$ for each object $M$ in $\A$ and all $i\geq 1$.
\item $\cst{\F}(\suspc M) = 0$ for each object $M$ in $\A$.
\end{eqc}
\end{intthm}
In the above theorem, $\suspc^{i} M$ denotes the $i$th $\C$-cosyzygy of $M$; see \ref{syzygy}. A contravariant functor $\F$ is said to be half $\Hom{-}{\C}$-exact if for each $\Hom{-}{\C}$-exact short exact sequence $0\xra{} A'\xra{} A\xra{} A''\xra{} 0$ of objects in $\A$, the sequence $\F(A'')\xra{} \F(A)\xra{} \F(A')$ is exact. If furthermore the sequence $0 \to \F(A'')\xra{} \F(A)\xra{} \F(A')$ is exact, then we call $\F$ left $\Hom{-}{\C}$-exact. Right $\Hom{-}{\C}$-exact functors are defined dually; see \ref{half,left and right}. We mention that the dual result of Theorem \ref{thmA} is given in Corollary \ref{prop3}, Lemma \ref{prp1} and Proposition \ref{prp3}.

As an immediate consequence of Theorem \ref{thmA} one gets that the functor $\CHom{-}{N}$ is left $\Hom{-}{\C}$-exact if and only if $\CHom{\suspc^{\geq 1}M}{N}=0$ for each object $M$ in $\A$ if and only if $\CHom{\suspc M}{N}=0$ for each object $M$ in $\A$, and if $\C$ is closed under direct summands then $\CHom{-}{N}$ is right $\Hom{-}{\C}$-exact if and only if $\CHom{M}{N}=0$ for each object $M$ in $\A$ if and only if $N\in\C$; see Corollary \ref{cor1}.

Using the tools developed in Section \ref{sec.2}, we study a relative complete cohomology theory with respect to a special precovering/preenveloping subcategory in Sections \ref{sec3} and \ref{stable}.

Let $\C$ be a special preenveloping subcategory of $\A$. For two objects $M$ and $N$ in $\A$ and $n\in\ZZ$, the $n$th complete cohomology of $M$ and $N$ with respect to $\C$ is defined as $\Text[\C]{n}{M}{N}=\mathrm{colim}_{i}\CHom{\suspc^{i}M}{\suspc^{i+n}N}$; see Definition \ref{complete cohomology}.  The following result is from Theorem \ref{main} and Propositions \ref{MC-ISO}, \ref{stabele and complete} and \ref{Tate}.

\begin{intthm}\label{thmB}
Let $\C$ be a special preenveloping subcategory of $\A$, and let $M$ and $N$ be objects in $\A$ with $M \qra I$ and $N \qra J$ proper $\C$-coresolutions. For each $n\in\ZZ$ there exist natural isomorphisms
\begin{equation*}
\Text[\C]{n}{M}{N} \cong \mathrm{colim}_{i} \Ext{1}{\suspc^{i+1}M}{\suspc^{i+n}N}\cong \HH[n]{\sHom{I}{J}}.
\end{equation*}

Moreover, if\: $\C$ is closed under direct summands, and $N$ has a Tate $\C$-coresolution $N\qra I\xra{\alpha}T$, then for each object $M$ in $\A$ and each $n\in\ZZ$, there is a natural isomorphism
$$\Text[\C]{n}{M}{N}\is \HH[n]{\Hom{M}{T}}.$$

If furthermore $\Ext{\geq 1}{^\perp\C}{\C}=0$, then for each $n\in\ZZ$ there is a natural isomorphism
\begin{equation*}
\Text[\C]{n}{M}{N} \cong \mathrm{colim}_{i}\sa^{-i}_{\C}\mathrm{Ext}^{n+i}_{\A}(M, N).
\end{equation*}
\end{intthm}
In the above theorem, $\widetilde{\mathrm{Hom}}_{\A}$ is the stable Hom functor (see \ref{stable Hom functor}), and $\sa^{-i}_{\C}\mathrm{Ext}$ is the left satellite functor of contravariant $\mathrm{Ext}$ functor; see \ref{sate} for more details. We mention that the dual result of Theorem \ref{thmB} for $\Cext[\D]{n}{M}{N}$ is also true; see Theorem \ref{main} and Propositions \ref{MC-ISO}, \ref{stabele and complete} and \ref{co-Tate}.

The relative complete cohomology has expected properties including vanishing. The next result is from Theorem \ref{C-dimension}.

\begin{intthm}\label{thmC}
Let $\C$ be a special preenveloping subcategory of $\A$ closed under direct summands. Then for each object $N$ in $\A$ the following statements are equivalent.
\begin{eqc}
\item $\C$-$\mathrm{id}_{\A}N < \infty $.
\item $\Text[\C]{n}{N}{-}=0=\Text[\C]{n}{-}{N}$ for all $n\in\ZZ$.
\item $\Text[\C]{0}{N}{N}=0$.
\end{eqc}
\end{intthm}

The definition of $\C$-injective dimension is given in \ref{def dim}. The dual version of Theorem \ref{thmC} can be found in Theorem \ref{D-dimension}.

In the final section, we give some applications of the above vanishing results, and characterize objects of finite homological dimensions including the flat dimension, cotorsion dimension, Gorenstein injective/flat dimension and projectively coresolved Gorenstein flat dimension.

\section{Preliminaries}
\noindent
Throughout this paper, $\A$ denotes an abelian category. We use the term ``subcategory" to mean a ``full and additive subcategory that is closed under isomorphisms".

\begin{bfhpg}[\bf Special preenveloping/precovering subcategories]
Given a subcategory $\x$ of $\A$, we write
$$^{\bot}{\x}=\{M\ |\ \Ext[\A]{1}{M}{X}=0 {\rm \ for\ all}\ X\in\x\}$$
$${\x}^{\bot}=\{N\ |\ \Ext[\A]{1}{X}{N}=0 {\rm \ for\ all}\ X\in\x\}.$$
Here $\Ext[\A]{1}{-}{-}$ is the $1$st Yoneda Ext group.
A \emph{special $\x$-preenvelope} of an object $M$ in $\A$ is an exact sequence $0 \to M \to X \to C \to 0$ with $X\in\x$ and $C\in{^{\bot}{\x}}$. Dually, a \emph{special $\x$-precover} of $M$ is an exact sequence $0 \to K \to X' \to M \to 0$ with $K\in{\x}^{\bot}$ and $X'\in\x$. Recall that a subcategory $\x$ of $\A$ is \emph{special preenveloping} if each object in $\A$ has a special $\x$-preenvelope. Dually a subcategory $\x$ of $\A$ is called \emph{special precovering} if each object in $\A$ has a special $\x$-precover.
\end{bfhpg}

\begin{setup*}
Throughout this paper, the symbol $\C$ denotes a special preenveloping subcategory of $\A$, and the symbol $\D$ denotes a special precovering subcategory of $\A$.
\end{setup*}

\begin{bfhpg}[\bf Proper (co)resolutions]
Let $M$ be an object in $\A$. A proper \emph{$\C$-coresolution} of $M$ is a complex $I$ of objects in $\C$ such that $I^{-n}=0=\HH[n]{I}$ for all $n>0$ and $\HH[0]{I}\cong M$, and the associated exact sequence
$I^{+}\equiv 0 \to M \to I^{0} \to I^1 \to \cdots $
is $\Hom{-}{\C}$-exact (that is, it remains exact after applying the functor $\Hom{-}{V}$ to it for each $V\in\C$), which is always denoted $M \qra I$. The \emph{proper $\D$-resolutions} $P\qra M$ of $M$ are defined dually.
\end{bfhpg}

\begin{bfhpg}[\bf (Co)Syzygies]\label{syzygy}
A proper $\C$-coresolution $M\qra I$ of an object $M$ in $\A$ is called \emph{special} if each $\Ker{(I^i\to I^{i+1})}$ is in $^{\perp}{\C}$ for $i\geq1$. Since $\C$ is a special preenveloping subcategory, every object in $\A$ has a special proper $\C$-coresolution. We let $\suspc^{i}M$ denote the kernel $\Ker{(I^i\to I^{i+1})}$ for some special proper $\C$-coresolution $M\qra I$; it is always called the $i$th \emph{$\C$-cosyzygy} of $M$. We always set $\suspc M = \suspc^1 M$.

Dually, a proper $\D$-resolution $P\qra M$ of an object $M$ in $\A$ is called \emph{special} if each $\Coker{(P_{i+1}\to P_i)}$ is in $\D^\perp$ for $i\geq1$. Since $\D$ is a special precovering subcategory, every object in $\A$ has a special proper $\D$-resolution. We let ${\sf\Omega}^{\D}_{i}M$ denote the cokernel $\Coker{(P_{i+1}\to P_i)}$ for some special proper $\D$-resolution $P\qra M$; it is always called the $i$th \emph{$\D$-syzygy} of $M$. We always set ${\sf\Omega}^{\D}M={\sf\Omega}^{\D}_{1}M$.
\end{bfhpg}

\begin{rmk}\label{rmk}
We mention that $\suspc^{i}M$ is in $^{\perp}\C$, and ${\sf\Omega}^{\D}_{i}M$ is in $\D^{\perp}$ for each $i\geq1$, which are used frequently in the paper.
\end{rmk}

\begin{bfhpg}[\bf Left satellite functors]\label{sate}
Let $\F$ be a contravariant additive functor from $\A$ to the category $\sf Ab$ of abelian groups.   For an object $M$ in $\A$, there is a special $\C$-preenvelope $0\to M\to I\xra{\pi} \suspc M\to 0$ of $M$ with $I\in\C$ and $\suspc M\in{^{\perp}\C}$. Following Cartan and Eilenberg \cite{CE56}, the $1$st \emph{left satellite} of $\F$ with respect to $\C$, denoted ${\sa^{-1}_{\C}}\F$, is defined as ${\sa^{-1}_{\C}}\F(M)=\Ker \F(\pi)$. Then ${\sa^{-1}_{\C}}\F$ is a contravariant additive functor from $\A$ to $\sf Ab$, and it is independent of the choices of special $\C$-preenvelopes. We set ${\sa^{-n}_{\C}}\F=\sa^{-1}_{\C}({\sa^{-n+1}_{\C}}\F)$ for each $n>0$, and set ${\sa^{0}_{\C}}\F=\F$.

Let $\G$ be a covariant additive functor from $\A$ to the category $\sf Ab$ of abelian groups. For each object $M$ in $\A$, there is a special $\D$-precover
$0\to {\sf\Omega}^{\D}M \xra{\epsilon} P\to M \to 0$
of $M$ with $P\in\D$ and ${\sf\Omega}^{\D}M\in\D^\perp$. The $1$st \emph{left satellite} of $\G$ with respect to $\D$, denoted ${\sa^{-1}_{\D}}\G$, is defined as ${\sa^{-1}_{\D}}\G(M)=\Ker \G(\epsilon)$. Then ${\sa^{-1}_{\D}}\G$ is a covariant additive functor from $\A$ to $\sf Ab$, and it is independent of the choices of special $\D$-precovers. We set ${\sa^{-n}_{\D}}\G=\sa^{-1}_{\D}({\sa^{-n+1}_{\D}}\G)$ for each $n>0$, and set ${\sa^{0}_{\D}}\G=\G$.

Let $M$ and $N$ be objects in $\A$. For the contravariant functor $\F=\Ext[\A]{i}{-}{N}$, the value of the left satellite functor ${\sa^{-n}_{\C}}\F$ at $M$, ${\sa^{-n}_{\C}}\F(M)$, is always denoted ${\sa^{-n}_{\C}}\Ext[ \A]{i}{M}{N}$. For the covariant functor $\G=\Ext[\A]{i}{M}{-}$, the value of the left satellite functor ${\sa^{-n}_{\D}}\G$ at $N$, ${\sa^{-n}_{\D}}\G(N)$, is always denoted ${\sa^{-n}_{\D}}\Ext[\A]{i}{M}{N}$.
\end{bfhpg}

\begin{rmk}\label{A2}
Since ${\sa^{-n}_{\C}}\F(I)=0$ for each $I\in\C$ and any $n>0$, one has
$${\sa^{-n}_{\C}}\F(M)= {\sa^{-n+k}_{\C}}\F(\suspc^{k}M)$$
for $n>k\geq0$. Similarly, since ${\sa^{-n}_{\D}}\G(P)=0$ for each $P\in\D$ and any $n>0$, one has ${\sa^{-n}_{\D}}\G(M)= {\sa^{-n+k}_{\D}}\G({\sf\Omega}^{\D}_{k}M)$ for $n>k\geq0$.
\end{rmk}

\begin{bfhpg}[\bf Dimensions and relative cohomology]\label{def dim}
The \emph{$\C$-injective dimension} of an object $N$ in $\A$ is the quantity
$${\C}\text{-}\text{id}_{\A}N=\text{inf}\text\{\text{sup}\{n\geq 0\ |\ I_n\neq 0\}~|~N\qra I\ \text{is a proper}\ {\C}\text{-coresolution of}\ N\}.$$
Let $N$ be an object in $\A$ with $N\qra I$ a proper $\C$-coresolution. Then for each object $M$ in $\A$ and every $i\in\ZZ$, the $i$th \emph{relative $\C$-cohomology} of $N$ with coefficients in $M$ is defined as
$$\Ext[\A\C]{i}{M}{N}=\HH[i]{\Hom{M}{I}}.$$

Specially, if $\A$ has enough injectives and $\C$ is the subcategory of injectives, then $\C$-$\text{id}_{\A}N$ is the classical injective dimension, and $\Ext[\A\C]{i}{M}{N}$ is actually the group $\Ext[\A]{i}{M}{N}$.

Dually, one has the definition of \emph{$\D$-projective dimension}, ${\D}\text{-}\mathrm{pd}_{\A}M$, of an object $M$ in $\A$. Also, for objects $M$ and $N$ in $\A$ with $P\qra M$ a proper $\D$-resolution, the $i$th \emph{relative $\D$-cohomology} of $M$ with coefficients in $N$ is defined as
$$\Ext[\D\A]{i}{M}{N}=\HH[i]{\Hom{P}{N}}.$$
\end{bfhpg}

\begin{rmk}
The relation between $\Ext[\A\C]{i}{M}{N}$ and $\Ext[\D\A]{i}{M}{N}$ may be derived from balanced pairs given by Chen \cite[Definition 1.1]{Chen}. Specially, if $(\D, \C)$ is a balanced pair in $\A$, then by \cite[Lemma 2.1]{Chen} there is a natural isomorphism $\Ext[\A\C]{i}{M}{N}\is\Ext[\D\A]{i}{M}{N}$ for all objects $M$ and $N$ in $\A$, and each $i\geq0$. In the following we will recall the definition of balanced pairs.
\end{rmk}

\begin{bfhpg}[\bf Balanced pairs]\label{balanced pairs}
Recall that a pair $(\x, \y)$ of subcategories of $\A$ is called a \emph{balanced pair} if the following conditions hold:
\begin{itemlist}
\item $\x$ is precovering and $\y$ is preenveloping.
\item For each object $M$ in $\A$, there is a proper $\x$-resolution $X\to M$ such that the associated exact sequence $X^{+}$ is $\Hom{-}{\y}$-exact.
\item For each object $N$ in $\A$, there is a proper $\y$-coresolution $N\to Y$ such that the associated exact sequence $Y^{+}$ is $\Hom{\x}{-}$-exact.
\end{itemlist}

Balanced pairs arise naturally from cotorsion triplets. Recall from \cite{Chen} that a triplet ($\x,\z,\y$) of subcategories of $\A$ is a complete hereditary cotorsion triplet if both $(\x,\z)$ and $(\z,\y)$ are complete hereditary cotorsion pairs, see \cite{EJ20} for the definition of complete hereditary cotorsion pairs. It follows from Estrada, P\'{e}rez and Zhu \cite[Proposition 4.2]{EPZ} that if ($\x,\z,\y$) is a complete hereditary cotorsion triplet then $(\x,\y)$ is a balanced pair\footnote{This result was first proved by Chen in \cite[Proposition 2.6]{Chen} under the assumption that $\A$ has enough projectives and injectives.}. As an immediate consequence one gets that if $\A$ has enough projectives and injectives then $(\sf{Prj},\sf{Inj})$ is a balanced pair, where $\sf{Prj}$ is the subcategory of projectives and $\sf{Inj}$ is the subcategory of injectives. For more examples of balanced pairs one refers to \cite{Chen} and \cite{EPZ}.
\end{bfhpg}

The next result can be found in \cite[Lemma 2.4]{Chen}.

\begin{lem}\label{Chen}
Let $M$ be an object in $\A$. Then for each $n\geq0$, the following statements are equivalent.
\begin{eqc}
\item $\D$-$\mathrm{pd}_{\A}M\leq n$.
\item $\Ext[\D\A]{i}{M}{-}=0$ for all $i>n$.
\item For each proper $\D$-resolution $P\qra M$, $\Coker{(P_{n+1}\to P_n)}$ is in $\D$.
\end{eqc}
\end{lem}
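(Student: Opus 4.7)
The plan is to prove the cyclic implications \eqclbl{i}$\!\implies\!$\eqclbl{ii}$\!\implies\!$\eqclbl{iii}$\!\implies\!$\eqclbl{i}, relying only on the definition of $\Ext[\D\A]{i}{M}{-}$ and on standard properties of proper $\D$-resolutions (in particular, that the relative Ext is independent of the chosen proper $\D$-resolution, via the comparison theorem for proper resolutions).

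For \eqclbl{i}$\!\implies\!$\eqclbl{ii}, I would simply pick a proper $\D$-resolution $P\qra M$ of length $\le n$, so that $P_i=0$ for $i>n$. Then $\Hom{P}{N}$ is concentrated in cohomological degrees $0,\dots,n$, so $\Ext[\D\A]{i}{M}{N}=\HH[i]{\Hom{P}{N}}=0$ for $i>n$ and every $N$. (Here the invariance of $\Ext[\D\A]{i}{M}{-}$ under the choice of proper $\D$-resolution, provided by the standard comparison/homotopy-equivalence of proper $\D$-resolutions, ensures that using this particular resolution computes the same object.)

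For \eqclbl{ii}$\!\implies\!$\eqclbl{iii}, let $P\qra M$ be any proper $\D$-resolution and break it into short exact sequences $0\to K_{i+1}\to P_i\to K_i\to 0$ with $K_i=\Coker(P_{i+1}\to P_i)$, $K_0=M$, each being $\Hom{\D}{-}$-exact. The canonical epimorphism $\pi_{n+1}\colon P_{n+1}\twoheadrightarrow K_{n+1}$ satisfies $\pi_{n+1}\circ\partial_{n+2}=0$ (since $\pi_{n+1}$ is the cokernel map), so it is an $(n{+}1)$-cocycle in $\Hom{P}{K_{n+1}}$. The hypothesis $\Ext[\D\A]{n+1}{M}{K_{n+1}}=0$ forces it to be a coboundary, i.e., there is $f\colon P_n\to K_{n+1}$ with $f\circ\partial_{n+1}=\pi_{n+1}$. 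Factoring $\partial_{n+1}=\iota_{n+1}\circ\pi_{n+1}$ through the inclusion $\iota_{n+1}\colon K_{n+1}\hookrightarrow P_n$ and cancelling the epi $\pi_{n+1}$ gives $f\circ\iota_{n+1}=\mathrm{id}_{K_{n+1}}$. Thus $0\to K_{n+1}\to P_n\to K_n\to 0$ splits, $K_n$ is a direct summand of $P_n\in\D$, and by our standing assumption that $\D$ is closed under direct summands, $K_n\in\D$.

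For \eqclbl{iii}$\!\implies\!$\eqclbl{i}, one truncates: replacing $\cdots\to P_{n+1}\to P_n\to P_{n-1}\to\cdots\to P_0$ by $0\to K_n\to P_{n-1}\to\cdots\to P_0$ gives a complex of length $n$ with terms in $\D$ whose short exact sequences are a subset of the original $\Hom{\D}{-}$-exact ones, so it is again a proper $\D$-resolution of $M$; hence $\D$-$\mathrm{pd}_\A M\le n$. The main technical step is \eqclbl{ii}$\!\implies\!$\eqclbl{iii}, which hinges on recognising that the cokernel map $\pi_{n+1}$ gives a canonical cohomology class whose triviality is precisely what splits off $K_n$; the other direction and the truncation step are routine. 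A mild background fact I will invoke (and which is standard for proper resolutions over precovering subcategories) is the well-definedness of $\Ext[\D\A]{i}{M}{-}$ independently of the chosen proper $\D$-resolution.
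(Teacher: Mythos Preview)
Your argument is correct and follows the standard route: the cyclic chain \eqclbl{i}$\Rightarrow$\eqclbl{ii}$\Rightarrow$\eqclbl{iii}$\Rightarrow$\eqclbl{i}, with the key step being that the cokernel map $\pi_{n+1}$ represents a class in $\Ext[\D\A]{n+1}{M}{K_{n+1}}$ whose vanishing forces the splitting. The paper itself does not give a proof; it simply cites \cite[Lemma~2.4]{Chen}, so there is nothing to compare against beyond noting that your proof is precisely the kind of direct argument one expects to find in that reference.
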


Dually, one has the following result.

\begin{lem}\label{Chen-dual}
Let $N$ be an object in $\A$. Then for each $n\geq0$, the following statements are equivalent.
\begin{eqc}
\item $\C$-$\mathrm{id}_{\A}N\leq n$.
\item $\Ext[\A\C]{i}{-}{N}=0$ for all $i>n$.
\item For each proper $\C$-coresolution $N\qra I$, $\Ker{(I^n\to I^{n+1})}$ is in $\C$.
\end{eqc}
\end{lem}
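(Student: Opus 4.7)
The plan is to prove the cyclic chain \eqclbl{i}$\implies$\eqclbl{ii}$\implies$\eqclbl{iii}$\implies$\eqclbl{i}, mirroring the proof of Lemma \ref{Chen}. Throughout I rely on two tools: the comparison theorem for proper $\C$-coresolutions (which guarantees that $\Ext[\A\C]{*}{-}{-}$ is well-defined and can be computed from any proper coresolution), and the long exact sequence it produces in the second variable from every $\Hom{-}{\C}$-exact short exact sequence.

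For \eqclbl{i}$\implies$\eqclbl{ii} I would pick a proper $\C$-coresolution $N\qra I$ of length at most $n$ and use it to compute $\Ext[\A\C]{i}{X}{N}=\HH[i]{\Hom{X}{I}}$; this vanishes for $i>n$ because $I^{i}=0$ there. For \eqclbl{iii}$\implies$\eqclbl{i}, given any proper $\C$-coresolution $N\qra I$ with $K=\Ker{(I^{n}\to I^{n+1})}\in\C$, the truncation $0\to N\to I^{0}\to\cdots\to I^{n-1}\to K\to 0$ inherits $\Hom{-}{\C}$-exactness from $I$ and so is a proper $\C$-coresolution of length at most $n$, giving $\C$-$\text{id}_{\A}N\leq n$.

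The core step is \eqclbl{ii}$\implies$\eqclbl{iii}. Fix a proper $\C$-coresolution $N\qra I$ and set $K=\suspc^{n}N$. Each short exact sequence $0\to\suspc^{i}N\to I^{i}\to\suspc^{i+1}N\to 0$ is $\Hom{-}{\C}$-exact, and since each $I^{i}\in\C$ serves as its own proper coresolution of length zero, $\Ext[\A\C]{j}{X}{I^{i}}=0$ for $j\geq1$; dimension shifting through these sequences yields
\begin{equation*}
\Ext[\A\C]{1}{X}{K}\cong\Ext[\A\C]{n+1}{X}{N}=0
\end{equation*}
for every $X\in\A$. Now take a special $\C$-preenvelope $0\to K\to C\to L\to 0$ with $C\in\C$ and $L\in{^{\perp}\C}$; the second condition makes this sequence $\Hom{-}{\C}$-exact, so its long exact sequence in $\Ext[\A\C]{*}{L}{-}$, together with $\Ext[\A\C]{1}{L}{C}=0$, gives exact
\begin{equation*}
\Hom{L}{C}\lra\Hom{L}{L}\lra\Ext[\A\C]{1}{L}{K}=0.
\end{equation*}
Hence $\mathrm{id}_{L}$ lifts to a section $L\to C$, the extension splits, and $K$ is a direct summand of $C\in\C$. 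Closure of $\C$ under direct summands then forces $K\in\C$.

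The main technical hurdle is the long exact sequence in $\Ext[\A\C]{*}{X}{-}$, which is not proved in the excerpt. I would justify it by the horseshoe construction for proper $\C$-coresolutions: starting from proper $\C$-coresolutions $A\qra I_{A}$ and $C\qra I_{C}$ of the outer terms of a $\Hom{-}{\C}$-exact sequence $0\to A\to B\to C\to 0$, special preenvelopingness produces a proper $\C$-coresolution $B\qra I_{B}$ with $I_{B}^{i}\cong I_{A}^{i}\oplus I_{C}^{i}$ and a degreewise split short exact sequence of complexes $0\to I_{A}\to I_{B}\to I_{C}\to 0$. Applying $\Hom{X}{-}$ preserves degreewise splitness, and the resulting cohomology long exact sequence is the one the argument needs.
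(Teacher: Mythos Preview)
Your argument is correct and is exactly the standard dualization the paper has in mind: in the paper this lemma is not proved at all---it is simply stated as the dual of \lemref{Chen}, which in turn is quoted from \cite[Lemma 2.4]{Chen}. Two minor remarks: the long exact sequence in the second variable that you build by horseshoe is already recorded as \lemref{longseq-dual}, so you may cite it rather than reprove it; and the symbol $\suspc^{i}N$ in the paper is reserved for cosyzygies taken from a \emph{special} proper $\C$-coresolution (see \ref{syzygy}), whereas in \eqclbl{ii}$\Rightarrow$\eqclbl{iii} you need the kernels $\Z[i]{I}$ of an arbitrary proper $\C$-coresolution---your dimension-shifting and splitting argument goes through verbatim for those, so only the notation needs adjusting.
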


\section{Stable functors of additive functors}\label{sec.2}
\noindent
In this section, we introduce relative stable functors of additive functors and investigate some properties including the exactness of these functors.

\begin{setup*}
Throughout this section, we let $\F$ (resp., $\G$) be a contravariant (resp., covariant) additive functor from $\A$ to the category $\sf Ab$ of abelian groups.
\end{setup*}

\begin{bfhpg}[\bf Stable functors]\label{stable functor}
For each object $M$ in $\A$, there is a proper $\C$-coresolution $M \qra I$. The $i$th \emph{left derived functor} of $\F$ with respect to $\C$, denoted $\mathrm{L}_i^{\C}F$, is defined as $\mathrm{L}_i^{\C}\F(M)=\HL[i]{\F(I)}$. It is known that $\mathrm{L}_i^{\C}\F(M)$ is independent of the choices of proper $\C$-coresolutions of $M$; see Enochs and Jenda \cite[Section 8.2]{EJ20}.

According to the universal property of cokernels there is a natural transformation $\rho: \mathrm{L}_0^{\C}\F \to \F$. The cokernel of $\rho$, denoted by $\cst{\F}$, is called the \emph{$\C$-stable} functor of $\F$. For an object $M$ in $\A$, $\cst{\F}(M)$ is independent of the choices of proper $\C$-coresolutions of $M$. It is clear that $\cst{\F}$ is a contravariant additive functor from $\A$ to $\sf Ab$.

Dually, for each object $N$ in $\A$, there is a proper $\D$-resolution $P \qra N$. The $i$th \emph{left derived functor} of $\G$ with respect to $\D$, denoted $\mathrm{L}_i^{\D}\G$, is defined as $\mathrm{L}_i^{\D}\G(M)=\HL[i]{\G(P)}$. It is known that $\mathrm{L}_i^{\D}\G(N)$ is independent of the choices of proper $\D$-resolutions of $N$; see \cite[Section 8.2]{EJ20}.

It follows from the universal property of cokernels that there is a natural transformation $\varrho: \mathrm{L}_0^{\D}\G \to \G$. The cokernel of $\varrho$, denoted by $\dst{\G}$, is called the \emph{$\D$-stable} functor of $\G$. For an object $N$ in $\A$, $\dst{\G}(N)$ is independent of the choices of proper $\D$-resolutions of $N$. Clearly, $\dst{\G}$ is a covariant additive functor from $\A$ to $\sf Ab$.
\end{bfhpg}

The following result is used frequently in the paper.

\begin{thm}\label{thm1}
Let $M$ and $N$ be objects in $\A$. Then the following statements hold.
\begin{prt}
\item Let $0 \to M \xra{d} I^0 \to \suspc M \to 0$ be a special $\C$-preenvelope of $M$. Then there is a natural isomorphism $\cst{\F}(M)\cong\Coker \F(d)$.
\item Let $0 \to {\sf\Omega}^{\D}N \to P_0 \xra{f} N \to 0$ be a special $\D$-precover of $N$. Then there is a natural isomorphism $\dst{\G}(N)\cong\Coker \G(f)$.
\end{prt}
\end{thm}
\begin{prf*}
We only prove (a); the statement (b) is proved dually.

Fix a proper $\C$-coresolution $0 \to \suspc M \to I^1 \to I^2 \to \cdots$. Then one gets a proper $\C$-coresolution $0 \to M \xra{d} I^0 \to I^1 \to \cdots$. This yields the following commutative diagram with exact rows and columns:
$$\xymatrix@C=15pt@R=15pt{
 \F(I^{1})\ar[d]  \ar[r]^{}       &\F(I^{0})    \ar[d]_{\F(d)} \ar[r]^{}     &\mathrm{L}_{0}^{\C}\F(M)  \ar[d]_{\rho_{M}}  \ar[r]^{} & 0  &  \\
    0\ar[r]    &\F(M)  \ar@{>>}[d]^{}\ar@{=}[r]_{}     &\F(M) \ar@{>>}[d]^{ } \ar[r]& 0   & \\
                    &\text{Coker} \F(d)     \ar@{.>}[r]^{\eta}             &\cst{\F}(M),       &                  &}$$
where the dotted morphism~$\eta$ is induced by the universal property of cokernels. Then $\eta$ is an isomorphism by the Snake Lemma.
\end{prf*}

\begin{cor}\label{lem1}
If $M$ is in $\C$ and $N$ is in $\D$, then $\cst{\F}(M)=0=\dst{\G}(N)$.
\end{cor}

Let $M$ and $N$ be objects in $\A$, and let $\F=\Hom{-}{N}$ and $\G=\Hom{M}{-}$. We always denote $\cst{\F}(M)$ by $\CHom{M}{N}$, and denote $\dst{\G}(N)$ by $\DHom{M}{N}$.

\begin{prp}\label{stable hom}
For objects $M$ and $N$ in $\A$, there are two equalities
\begin{equation*}
\CHom{M}{N}=\Hom{M}{N}/{\C}\hspace{-0.5mm}\Hom{M}{N},\ and
\end{equation*}
\begin{equation*}
\DHom{M}{N}=\Hom{M}{N}/{\D}\hspace{-0.5mm}\Hom{M}{N}.
\end{equation*}
Here ${\C}\hspace{-0.5mm}\Hom{M}{N}$ (resp., ${\D}\hspace{-0.5mm}\Hom{M}{N}$) is the subgroup of $\Hom{M}{N}$ consisting of the morphisms that factor through an object in $\C$ (resp., $\D$).
\end{prp}
\begin{prf*}
We only prove the first equality; the second one is proved dually.

For an object $M$ in $\A$, there is a proper $\C$-coresolution $M\qra I$.
Applying the functor $\Hom{-}{N}$ to the exact sequence $0\to M\xra{d} I^{0}\to I^{1}\to \cdots $, one gets the following commutative diagram:
$$\xymatrix@C=15pt@R=15pt{
\Hom{I^{1}}{N}\ar[r]^{} & \Hom{I^{0}}{N}\ar[d]_{d^{*}}\ar[r]^{\pi\ \ }& \mathrm{L}_{0}^{\C}\hspace{-0.5mm}\Hom{M}{N}\ar[r]^{}\ar[dl]^{\rho_{M}} &0 \\
& \Hom{M}{N}.}$$
By the definition of $\C$-stable functor, one has $$\CHom{M}{N}=\Hom{M}{N}/\text{Im}\rho_{M}=\Hom{M}{N}/\text{Im}d^{*}.$$ So it is sufficient to show $\text{Im}d^{*} = {\C}\hspace{-0.5mm}\Hom{M}{N}$. Clearly, $\text{Im}d^{*} \subseteq{\C}\hspace{-0.5mm}\Hom{M}{N}$.
Conversely, let $f$ be in ${\C}\hspace{-0.5mm}\Hom{M}{N}$. Then there exists an object $C\in \C$ and morphisms $g : M\rightarrow C$ and $h : C\rightarrow N$ such that $f = hg$. Since the sequence $\Hom{I^0}{C} \to \Hom{M}{C} \to 0$ is exact, there exists a morphism $\lambda : I^{0}\rightarrow C$ such that $g = \lambda d$. Then $f = hg = h\lambda d = d^{*}(h\lambda) \in\text{Im}d^{*}$, as desired.
\end{prf*}

\begin{rmk}
Assume that $\A$ has enough injectives and $\C$ is the subcategory of injectives in $\A$. Then by Proposition \ref{stable hom}, $\CHom{M}{N}$ is the stable Hom group based on injectives given by Nucinkis \cite{BN98}. Dually, if $\A$ has enough projectives and $\D$ is the subcategory of projectives in $\A$, then by Proposition \ref{stable hom}, $\DHom{M}{N}$ is the classical stable Hom group.
\end{rmk}

In the following we collect some results on exactness of $\C$-stable functors and $\D$-stable functors.

\begin{ipg}\label{1.3}
Let $M$ be an object in $\A$ with $0 \to M \to I \to \suspc M  \to 0$ and $0 \to M \to I' \to \suspc'M \to 0$ special $\C$-preenvelopes. Then one gets the following commutative diagram with exact rows:
\begin{equation*}\label{diag2}
\tag{\ref{1.3}.1}
\xymatrix@C=15pt@R=15pt{
  0 \ar[r] & M \ar@{=}[d]\ar[r] & I \ar[d]_{\beta}\ar[r]^{g\ \ } & \suspc M \ar[d]_{\alpha}\ar[r] & 0 \\
  0 \ar[r] & M \ar[r] & I' \ar[r]^{g'\ \ } & \suspc'M \ar[r] & 0.}
\end{equation*}

Dually, let $N$ be an object in $\A$ with $0 \to {\sf\Omega}^{\D}N \to P \to N \to 0$ and $0 \to {\sf\Omega'}^{\D}N \to P' \to N \to 0$ special $\D$-precovers. Then one gets the next commutative diagram with exact rows:
\begin{equation*}\label{diag3}
\tag{\ref{1.3}.2}
\xymatrix@C=15pt@R=15pt{
0 \ar[r] & {\sf\Omega}^{\D}N \ar[d]_{\varphi}\ar[r] & P \ar[d]_{}\ar[r]^{} &  N \ar@{=}[d]\ar[r] & 0 \\
0 \ar[r] & {\sf\Omega'}^{\D}N \ar[r] & P' \ar[r]^{} & N \ar[r] & 0.}
\end{equation*}
\end{ipg}

\begin{prp}\label{cor2.4}
Adopt the notation from \ref{1.3}. $\cst{\F}(\alpha)$ and $\dst{\G}(\varphi)$ are isomorphisms.
\end{prp}
\begin{prf*}
We prove that $\cst{\F}(\alpha)$ is an isomorphism; the next statement is proved dually.

We adopt the setup and notation in \ref{1.3}. One gets an exact sequence
$$0 \to I \xra{\binom{\beta}{g}} I'\oplus\suspc M \xra{(g',-\alpha)} \suspc'M \to 0;$$
it is split as $I\in\C$ and $\suspc'M\in{^\perp\C}$ (see Remark \ref{rmk}). Then there exists a morphism $(\tau,\lambda): I'\oplus\suspc M \rightarrow I$ satisfying id$_{I}=(\tau,\lambda)\binom{\beta}{g}=\tau\beta+\lambda g$. Hence one obtains the following commutative diagram with exact rows:
$$\xymatrix@C=20pt@R=20pt{
  0 \ar[r] & I \ar@{=}[d]\ar[r]^{\binom{\beta}{g}\ \ \ \ \ \ } & I'\oplus\suspc M \ar[d]^{\left(\begin{smallmatrix}\tau &\lambda\\ g' &-\alpha\end{smallmatrix}\right)}\ar[r]^{\ \ \ (g',-\alpha)} & \suspc'M \ar@{=}[d]\ar[r] & 0 \\
  0 \ar[r] & I \ar[r]^{\binom{1}{0}\ \ \ \ \ } & I\oplus\suspc' M \ar[r]^{ \ \ (0,1)}& \suspc'M \ar[r] & 0.}$$
It follows from the Snake Lemma that $\left(\begin{smallmatrix}\tau &\lambda\\ g' &-\alpha\end{smallmatrix}\right): I'\oplus\suspc M\rightarrow I\oplus\suspc' M$ is an isomorphism. Since $\cst{\F}$ is an additive functor and $\cst{\F}(I)=0=\cst{\F}(I')$ by Corollary \ref{lem1}, it is obvious that $\cst{\F}(\alpha)$ is an isomorphism.
\end{prf*}

\begin{ipg}\label{1.2}
Let $0\to M'\to M\to M''\to 0$ be an exact sequence of objects in $\A$ which is $\Hom{-}{\C}$-exact. Fix special $\C$-preenvelopes $0 \to M' \to I^0 \to \suspc M' \to 0$ and $0 \to M'' \to H^0 \to \suspc M'' \to 0$. One gets the following commutative diagram with exact rows and columns (see \cite[Remark 8.2.2]{EJ20}):
\begin{equation*}\label{diag1}
\tag{\ref{1.2}.1}
\xymatrix@C=15pt@R=15pt{ &0 \ar[d]_{}       &0 \ar[d]_{}         &0 \ar[d]_{}  &&\\
0\ar[r]^{}   &M'\ar[d]\ar[r]^{} & M\ar[d] \ar^{}[r] &M''\ar[d]\ar[r]^{ }    &0 &   \\
0\ar[r]^{}   &I^{0}\ar[d]\ar[r]^{} &I^{0}\oplus H^{0}\ar[d]\ar^{}[r]  &H^{0}\ar[d]\ar[r]^{ }         &0 &    \\
0\ar[r]^{}&\suspc M'\ar[r]_{}\ar[d]_{} &S \ar[r]\ar[d]_{}    &\suspc M''\ar[r]^{}\ar[d]_{} &0.\\
    &  0     & 0        &0      &   &                     }
\end{equation*}
Since $\suspc M'$ and $\suspc M''$ are in $^\perp\C$ (see Remark \ref{rmk}), so is $S$. Thus the middle column is a special $\C$-preenvelope of $M$, and the third non-zero row is $\Hom{-}{\C}$-exact; in this case we write $S$ as $\suspc M$. Inductively, one gets a $\Hom{-}{\C}$-exact short exact sequence
$$0\to \suspc^{i}M'\to\suspc^{i} M\to \suspc^{i}M''\to 0$$
for each $i\geq 1$.

Dually, let $0\to N'\to N\to N''\to 0$ be an exact sequence of objects in $\A$ which is $\Hom{\D}{-}$-exact. Fix special $\D$-precovers $0 \to {\sf\Omega}^{\D}N' \to P_0 \to N' \to 0$ and $0 \to {\sf\Omega}^{\D}N'' \to Q_0 \to N'' \to 0$. One gets the following commutative diagram with exact rows and columns (see \cite[Lemma 8.2.1]{EJ20}):
\begin{equation*}
\xymatrix@C=15pt@R=15pt{ &0 \ar[d]_{}       &0 \ar[d]_{}         &0 \ar[d]_{}  &&\\
0\ar[r]^{}&{\sf\Omega}^{\D}N'\ar[d]\ar[r]^{} & T\ar[d] \ar^{}[r] &{\sf\Omega}^{\D}N''\ar[d]\ar[r]^{ } &0 &   \\
0\ar[r]^{}&P_0\ar[d]\ar[r]^{}&P_0\oplus Q_0\ar[d]\ar^{}[r]&Q_0\ar[d]\ar[r]^{}        &0 & \\
0\ar[r]^{}& N'\ar[r]_{}\ar[d]_{}& N \ar[r]\ar[d]_{}& N''\ar[r]^{}\ar[d]_{} &0.\\
&  0     & 0        &0      &   & }
\end{equation*}
Since ${\sf\Omega}^{\D}N'$ and ${\sf\Omega}^{\D}N''$ are in $\D^\perp$ (see Remark \ref{rmk}), so is $T$. Thus the middle column is a special $\D$-precover of $N$, and the first non-zero row is $\Hom{\D}{-}$-exact; in this case we write $T$ as ${\sf\Omega}^{\D}N$. Inductively, one gets a $\Hom{\D}{-}$-exact short exact sequence
$$0\to {\sf\Omega}^{\D}_i N'\to {\sf\Omega}^{\D}_i N\to {\sf\Omega}^{\D}_i N''\to 0$$
for each $i\geq 1$.
\end{ipg}

\begin{rmk}
In \ref{1.2}, one sees that $\suspc^{i} M$ is actually based on the special proper $\C$-coresolution $M\qra I\oplus H$, where $M'\qra I$ and $M''\qra H$ are special proper $\C$-coresolutions of $M'$ and $M''$, respectively. Similarly, ${\sf\Omega}_i^{\D}N$ is actually based on the special proper $\D$-resolution $P\oplus Q\qra N$, where $P\qra N'$ and $Q\qra N''$ are special proper $\D$-resolutions of $N'$ and $N''$, respectively. Proposition \ref{cor2.4} above asserts that $\cst{\F}(\suspc^{i} M)$ and $\dst{\G}({\sf\Omega}_i^{\D}N)$ are independent of the choices of special proper $\C$-coresolutions and special proper $\D$-resolutions, respectively.
\end{rmk}

\begin{ipg}\label{half,left and right}
The contravariant functor $\F$ is said to be \emph{half $\Hom{-}{\C}$-exact} if for each $\Hom{-}{\C}$-exact short exact sequence $0\xra{} A'\xra{} A\xra{} A''\xra{} 0$ of objects in $\A$, the sequence $\F(A'')\xra{} \F(A)\xra{} \F(A')$ is exact. If furthermore the sequence $0 \to \F(A'')\xra{} \F(A)\xra{} \F(A')$ is exact, then we call $\F$ \emph{left $\Hom{-}{\C}$-exact}. \emph{Right $\Hom{-}{\C}$-exact} functors are defined dually.

The covariant functor $\G$ is \emph{half $\Hom{\D}{-}$-exact} if for each $\Hom{\D}{-}$-exact short exact sequence $0\xra{} A'\xra{} A\xra{} A''\xra{} 0$ of objects in $\A$, the sequence $\G(A')\xra{} \G(A)\xra{} \G(A'')$ is exact. If furthermore the sequence $0 \to \G(A')\xra{} \G(A)\xra{} \G(A'')$ is exact, then we call $\G$ \emph{left $\Hom{\D}{-}$-exact}. \emph{Right $\Hom{\D}{-}$-exact} functors are defined dually.
\end{ipg}

\begin{thm}\label{lem2}
Let $\F$ be a left $\Hom{-}{\C}$-exact functor and $\G$ a left $\Hom{\D}{-}$-exact functor. Then the following statements hold.
\begin{prt}
\item For each $\Hom{-}{\C}$-exact short exact sequence $0\to M'\to M\to M''\to 0$ in $\A$, there exists an exact sequence
$$\cdots\xra{}\cst{\F}(\suspc M)\xra{} \cst{\F}(\suspc M')\xra{}\cst{\F}(M'')\xra{}\cst{\F}(M)\xra{}\cst{\F}(M').$$
\item For each $\Hom{\D}{-}$-exact short exact sequence $0\to N'\to N\to N''\to 0$ in $\A$, there exists an exact sequence
$$\cdots\xra{}\dst{\G}({\sf\Omega}^{\D}N)\xra{} \dst{\G}({\sf\Omega}^{\D}N'')\xra{}\dst{\G}(N')\xra{}\dst{\G}(N)\xra{}\dst{\G}(N'').$$
\end{prt}
\end{thm}
\begin{prf*}
We prove (a); the statement (b) is proved dually.

Applying the functor $\F$ to the diagram (\ref{diag1}), one gets the following commutative diagram:
$$\xymatrix@C=15pt@R=15pt{&0 \ar[d]_{}       &0 \ar[d]_{}         &0 \ar[d]_{}  &&\\
 &\F(\suspc M'')\ar[d]_{} &\F(\suspc M)\ar[d]_{}  &\F(\suspc M') \ar[d]_{} &  \\
 0  \ar[r]^{}   &\F(H^{0}) \ar[d] \ar[r]  &\F(I^{0}\oplus H^{0})\ar[d]\ar[r]   & \F(I^{0}) \ar[d] \ar[r]^{ } &0  \\
               0  \ar[r]^{}  &\F(M'')\ar[d] \ar[r]       &\F(M) \ar[d] \ar[r]         &\F(M')\ar[d] &     \\
                &\cst{\F}(M'')\ar[d]                   &\cst{\F}(M)\ar[d]                &\cst{\F}(M').\ar[d] \\
                &0  & 0 & 0  &&}$$
Since $\F$ is left $\Hom{-}{\C}$-exact, all rows and columns are exact by Theorem \ref{thm1}. Thus by the Snake Lemma one has an exact sequence
\begin{equation*}\label{connect}
\tag{\ref{lem2}.1}
0 \to \F(\suspc M'')\to \F(\suspc M) \to \F(\suspc M') \to \cst{\F}(M'')\xra{} \cst{\F}(M) \to \cst{\F}(M').
\end{equation*}

The sequence $0\xra{} \suspc M'\xra{} \suspc M\xra{} \suspc M''\xra{} 0$ is a $\Hom{-}{\C}$-exact short exact sequence as $\suspc M''\in{^\perp\C}$. Fix special $\C$-preenvelopes $0 \to \suspc M' \to I^1 \to \suspc^2 M' \to 0$ and $0 \to \suspc M'' \to H^1 \to \suspc^2 M'' \to 0$. There is a commutative diagram with exact rows and columns:
$$\xymatrix@C=15pt@R=15pt{&0 \ar[d]_{}       &0 \ar[d]_{}         &0 \ar[d]_{}  &&\\
 0 \ar[r]^{}   &\suspc M' \ar[d]_{} \ar[r]^{}&\suspc M\ar[d]_{}  \ar[r]^{}&\suspc M''\ar[d]_{} \ar[r]^{}   &0 \\
 0 \ar[r]^{}   &I^{1}\ar[d]^{} \ar[r]^{}  &I^{1}\oplus H^{1} \ar[d]^{} \ar[r]^{}&H^{1}\ar[r]^{}\ar[d]_{}&0\\
 0 \ar[r]^{}   &\suspc^{2}M' \ar[d]_{}   \ar[r]_{}          &\suspc^{2}M  \ar[r]^{} \ar[d]_{}      &\suspc^{2}M'' \ar[d]_{}  \ar[r]^{} &0. \\
&  0     & 0        &0      &   &  }$$
Since $\suspc^{2}M'$ and $\suspc^{2}M''$ are in ${^\perp\C}$, so is $\suspc^{2}M$. Thus the middle column is a special $\C$-preenvelope of $\suspc M$. Applying the functor $\F$ to the above diagram and combining the exact sequence (\ref{connect}), one gets the following commutative diagram with exact columns:
$$\xymatrix@C=8pt@R=15pt{&0 \ar[d]_{}       &0 \ar[d]_{}         &0 \ar[d]_{}  &&\\
 &F(\suspc^{2}M'')\ar[d]_{} \ar[r]^{} &F(\suspc^{2}M)\ar[d]_{}\ar[r]^{}&F(\suspc^{2}M') \ar[d]_{} &\\
 0 \ar[r]^{}  &F(H^{1})  \ar[d]^{} \ar[r]^{}    &F(I^{1}\oplus H^{1}) \ar[d] \ar[r]    &F(I^{1})\ar[r]^{}\ar[d]  &0\ar[r]^{}\ar[d]&0\ar[r]\ar[d]^{}&0\ar[d]\\
 0 \ar[r]^{}  &F(\suspc M'') \ar[d]_{} \ar[r]_{} &F(\suspc M) \ar[d] \ar[r]&F(\suspc M')\ar[d] \ar[r]& \cst{\F}(M'')\ar[r]\ar@{=}[d]& \cst{\F}(M)\ar@{=}[d] \ar[r]& \cst{\F}(M')\ar@{=}[d]  \\
&\cst{F}(\suspc M'')\ar[d]_{} \ar[r]_{} &\cst{F}(\suspc M) \ar[r]\ar[d]_{} &\cst{F}(\suspc M')\ar[d] \ar@{.>}[r]^{\ \delta}  &\cst{\F}(M'')\ar[r]\ar[d]_{}& \cst{\F}(M)\ar[r]\ar[d]_{}& \cst{\F}(M').\ar[d]_{}\\
&  0     & 0        &0      &0   &0    &0             }$$
Here the first three non-zero rows are exact, and
$\delta: \cst{F}(\suspc M')\to\cst{\F}(M'')$ is obtained by the universal property of cokernels. Taking out a part from the above commutative diagram, one gets the following commutative diagram with the first two rows and all columns exact:
$$\xymatrix@C=15pt@R=15pt{
&\F(I^{1}\oplus H^{1}) \ar[d] \ar[r]  &\F(I^{1})\ar[d]\ar[r]& 0 \ar[d] \\
&F(\suspc M)\ar[d] \ar[r] &F(\suspc M') \ar[d] \ar[r] &\cst{F}(M'')\ar@{=}[d] & \\
&\cst{\F}(\suspc M)\ar[r]\ar[d]&\cst{\F}(\suspc M')\ar[r]^{\delta}\ar[d]&\cst{\F}(M'').\ar[d]\\
&0  & 0 & 0  &&}$$
It follows from \cite[III. Lemma 3.2]{CE56} that the sequence
$$\cst{\F}(\suspc M)\to \cst{\F}(\suspc M')\xra{\delta} \cst{\F}(M'')$$
is exact. The exactness of the sequence $\cst{\F}(\suspc M')\xra{\delta} \cst{\F}(M'')\to\cst{\F}(M)$ can be obtained similarly. Hence one gets that the sequence
$$\cst{\F}(\suspc M'')\to \cst{\F}(\suspc M)\to\cst{\F}(\suspc M')\to\cst{\F}(M'')\to\cst{\F}(M)\to\cst{\F}(M')$$
is exact. Continuing this process, one gets the desired exact sequence in the statement.
\end{prf*}

\begin{cor}\label{left exactness}
Let $\F$ be a left $\Hom{-}{\C}$-exact functor. Then the following statements are equivalent.
\begin{eqc}
\item $\cst{\F}$ is left $\Hom{-}{\C}$-exact.
\item $\cst{\F}(\suspc^{i} M) = 0$ for each object $M$ in $\A$ and all $i\geq 1$.
\item $\cst{\F}(\suspc M) = 0$ for each object $M$ in $\A$.
\end{eqc}
\end{cor}
\begin{prf*}
The implication \eqclbl{ii}$\Rightarrow$\eqclbl{iii} is clear, and \eqclbl{iii}$\Rightarrow$\eqclbl{i} holds by Theorem \ref{lem2}.

\proofofimp{i}{ii} For each $i\geq 1$, consider a special $\C$-preenvelope $$0\to\suspc^{i-1}M\to I^{i-1}\to \suspc^{i}M\to 0.$$
By $(i)$ one gets an exact sequence $0\to\cst{\F}(\suspc^{i}M)\to\cst{\F}(I^{i-1})$. Since $I^{i-1}$ is in $\C$, one gets $\cst{\F}(I^{i-1})=0$ by Corollary \ref{lem1}, and so $\cst{\F}(\suspc^{i}M) = 0$.
\end{prf*}

Dually, we have the next result; one refers to Martsinkovsky and Zangurashvili \cite[Theorem 4.10]{MZ15} for some more equivalent conditions in the case where $\D$ is the subcategory of projectives and $\G=\Hom{M}{-}$.

\begin{cor}\label{prop3}
Let $\G$ be a left $\Hom{\D}{-}$-exact functor. Then the following statements are equivalent.
\begin{eqc}
\item $\dst{\G}$ is left $\Hom{\D}{-}$-exact.
\item $\dst{\G}({\sf\Omega}^{\D}_i N) = 0$ for each object $N$ in $\A$ and all $i\geq 1$.
\item $\dst{\G}({\sf\Omega}^{\D}N) = 0$ for each object $N$ in $\A$.
\end{eqc}
\end{cor}

An absolute analog of the following lemma can be found in \cite[Proposition 4.3]{MZ15}.

\begin{lem}\label{prp1}
If\: $\F$ is a half\: $\Hom{-}{\C}$-exact functor, then so is the functor $\cst{\F}$. If\: $\G$ is a half\: $\Hom{\D}{-}$-exact functor, then so is the functor $\dst{\G}$.
\end{lem}
\begin{prf*}
We prove the first statement; the second one is proved dually.

Let $0\xra{} M'\xra{} M\xra{} M''\xra{} 0$ be a $\Hom{-}{\C}$-exact short exact sequence in $\A$. Applying the functor $\F$ to the diagram (\ref{diag1}), one gets the following commutative diagram with exact columns by Theorem \ref{thm1}:
$$\xymatrix@C=15pt@R=15pt{
 0  \ar[r]^{}   &\F(H^{0}) \ar[d] \ar[r]  &\F(I^{0}\oplus H^{0})\ar[d]\ar[r]   & \F(I^{0}) \ar[d] \ar[r]^{ } &0  \\
                &\F(M'')\ar[d] \ar[r]       &\F(M) \ar[d] \ar[r]         &\F(M')\ar[d] &     \\
                &\cst{\F}(M'')\ar[r]\ar[d]                   &\cst{\F}(M)\ar[r]\ar[d]                &\cst{\F}(M').\ar[d] \\
                &0  & 0 & 0  &&}$$
The first two rows are exact as $F$ is a half $\Hom{-}{\C}$-exact additive functor, then so is the third one by \cite[III. Lemma 3.2]{CE56}. Thus $\cst{\F}$ is half $\Hom{-}{\C}$-exact.
\end{prf*}

\begin{prp}\label{prp2}
Let $\F$ be a half\: $\Hom{-}{\C}$-exact functor. Then the following statements are equivalent.
\begin{eqc}
\item $\cst{\F}$ is right $\Hom{-}{\C}$-exact.
\item $\cst{\F}=0$.
\item $\F$ is right $\Hom{-}{\C}$-exact.
\end{eqc}
\end{prp}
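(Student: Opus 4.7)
My plan is to establish the three implications via the pivot condition \eqclbl{ii}, showing \eqclbl{ii}$\Rightarrow$\eqclbl{i} (trivial), \eqclbl{i}$\Rightarrow$\eqclbl{ii}, \eqclbl{iii}$\Rightarrow$\eqclbl{ii}, and \eqclbl{ii}$\Rightarrow$\eqclbl{iii}. The common device throughout is, for each object $M$, the special $\C$-preenvelope $0 \to M \xra{d} I^{0} \to \suspc M \to 0$, which is automatically $\Hom{-}{\C}$-exact because $\suspc M \in {^{\perp}\C}$, together with the identification $\cst{\F}(M)\cong\Coker\F(d)$ from Theorem \ref{thm1} and the vanishing $\cst{\F}(I^{0})=0$ from Corollary \ref{lem1}.

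The easier three directions go as follows. For \eqclbl{ii}$\Rightarrow$\eqclbl{i}, the zero functor is trivially right $\Hom{-}{\C}$-exact. For \eqclbl{i}$\Rightarrow$\eqclbl{ii}, I would apply $\cst{\F}$ to the special $\C$-preenvelope above and use right $\Hom{-}{\C}$-exactness of $\cst{\F}$ to obtain an exact sequence $\cst{\F}(\suspc M)\to\cst{\F}(I^{0})\to\cst{\F}(M)\to 0$; since $I^{0}\in\C$, Corollary \ref{lem1} gives $\cst{\F}(I^{0})=0$, forcing $\cst{\F}(M)=0$. For \eqclbl{iii}$\Rightarrow$\eqclbl{ii}, applying the (hypothetically) right $\Hom{-}{\C}$-exact functor $\F$ to the same sequence yields that $\F(d)\colon \F(I^{0})\to\F(M)$ is surjective, so $\Coker\F(d)=0$, and Theorem \ref{thm1} identifies this cokernel with $\cst{\F}(M)$.

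The main obstacle is \eqclbl{ii}$\Rightarrow$\eqclbl{iii}. Starting from any $\Hom{-}{\C}$-exact short exact sequence $0 \to M' \to M \to M'' \to 0$, the assumed half $\Hom{-}{\C}$-exactness of $\F$ already gives exactness at $\F(M)$ in $\F(M'')\to\F(M)\to\F(M')$, so it remains only to show that $\F(M)\to\F(M')$ is surjective. Here I would invoke the diagram $(\ref{diag1})$ constructed in \ref{1.2} and apply the contravariant functor $\F$. Two facts then combine: the split-exact middle row $0\to H^{0}\to I^{0}\oplus H^{0}\to I^{0}\to 0$ makes $\F(I^{0}\oplus H^{0})\to\F(I^{0})$ surjective, and \eqclbl{ii} together with Theorem \ref{thm1} makes $\F(I^{0})\to\F(M')$ surjective. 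A short diagram chase---lift $x\in\F(M')$ first to $y\in\F(I^{0})$, then to $z\in\F(I^{0}\oplus H^{0})$, and push across via $\F(I^{0}\oplus H^{0})\to\F(M)$---produces a preimage of $x$ in $\F(M)$ by the commutativity of the square $M'\to M\to I^{0}\oplus H^{0} \;=\; M'\to I^{0}\to I^{0}\oplus H^{0}$, completing the argument.
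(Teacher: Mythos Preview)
Your proof is correct. The implication \eqclbl{i}$\Rightarrow$\eqclbl{ii} is exactly the paper's, and your diagram chase for \eqclbl{ii}$\Rightarrow$\eqclbl{iii} is sound: the commutative square from diagram~(\ref{diag1}) really does let you lift $x\in\F(M')$ through $\F(I^{0})$ and $\F(I^{0}\oplus H^{0})$ to a preimage in $\F(M)$.

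The overall logical route differs from the paper's. The paper proves the cycle \eqclbl{i}$\Rightarrow$\eqclbl{ii}$\Rightarrow$\eqclbl{iii}$\Rightarrow$\eqclbl{i}. For \eqclbl{ii}$\Rightarrow$\eqclbl{iii} it avoids any diagram chase: from the defining exact sequence $\mathrm{L}_{0}^{\C}\F \xra{\rho} \F \to \cst{\F} \to 0$ of \ref{stable functor}, the hypothesis $\cst{\F}=0$ makes $\rho$ an epimorphism, and since $\mathrm{L}_{0}^{\C}\F$ is automatically right $\Hom{-}{\C}$-exact (being a $0$th left derived functor), so is its epimorphic image $\F$. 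For \eqclbl{iii}$\Rightarrow$\eqclbl{i} the paper argues directly, invoking Lemma~\ref{prp1} for half-exactness of $\cst{\F}$ and then pushing the surjectivity of $\F(M)\to\F(M')$ down to $\cst{\F}(M)\to\cst{\F}(M')$. Your route instead factors \eqclbl{iii}$\Rightarrow$\eqclbl{i} through \eqclbl{ii}, which is arguably cleaner and bypasses Lemma~\ref{prp1} entirely; on the other hand, the paper's \eqclbl{ii}$\Rightarrow$\eqclbl{iii} via $\mathrm{L}_{0}^{\C}\F$ is more conceptual and does not require the Horseshoe construction of \ref{1.2}.
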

\begin{prf*}
\proofofimp{i}{ii} For an object $M$ in $\A$, consider a special $\C$-preenvelope
$$0 \to M \to I^0 \to \suspc M\to 0.$$
Then from $(i)$ one has an exact sequence $\cst{\F}(I^0)\to\cst{\F}(M) \to 0$. Since $I^0\in\C$, one gets $\cst{F}(I^0)=0$ by Corollary \ref{lem1}, and so $\cst{F}(M)=0$.

\proofofimp{ii}{iii} By the definition of $\C$-stable functor, there exists an exact sequence of functors $ \mathrm{L}_{0}^{\C}\F \xra{\rho} \F\to \cst{\F}\to 0$. So $\rho$ is an epimorphism by $(ii)$. Since the functor $\mathrm{L}_{0}^{\C}\F$ is right $\Hom{-}{\C}$-exact by \cite[Theorem 8.2.5(2)]{EJ20}, the statement $(iii)$ holds.

\proofofimp{iii}{i} Let $0\xra{} M'\xra{} M\xra{} M''\xra{} 0$ be a $\Hom{-}{\C}$-exact short exact sequence in $\A$. By Lemma \ref{prp1}, the sequence $\cst{\F}(M'')\xra{}\cst{\F}(M)\xra{}\cst{\F}(M')$ is exact. Then by Theorem \ref{thm1} one gets the following commutative diagram with exact rows and columns:
$$\xymatrix@C=15pt@R=15pt{F(M'') \ar[d]_{} \ar[r] & F(M) \ar[d]_{} \ar[r] &F(M')  \ar[d]_{}  \ar[r]^{}   & 0  &  \\
            \cst{F}(M'')  \ar[d]^{}\ar[r]    &\cst{F}(M)  \ar[d]^{}\ar[r]    &\cst{F}(M'). \ar[d]^{ } & &      \\
            0 &0  &0      &      &  }$$
So the sequence $\cst{\F}(M'')\xra{}\cst{\F}(M)\xra{}\cst{\F}(M') \xra{} 0$ is exact.
\end{prf*}

Let $N$ be an object in $\A$. We notice that if $\C$ is closed under direct summands, then $N$ is in $\C$ if and only if the functor $\F=\Hom{-}{N}$ is right $\Hom{-}{\C}$-exact. Thus the following result is immediate by Proposition \ref{prp2}.

\begin{cor}\label{cor1}
Suppose that $\C$ is closed under direct summands. Then the following statements are equivalent for an object $N$ in $\A$.
\begin{eqc}
\item $\CHom{-}{N}$ is right $\Hom{-}{\C}$-exact.
\item $\CHom{M}{N}=0$ for each object $M$ in $\A$.
\item $N\in\C$.
\end{eqc}
\end{cor}

Dually, we have the following two results.

\begin{prp}\label{prp3}
Let $\G$ be a half\: $\Hom{\D}{-}$-exact functor. Then the following statements are equivalent.
\begin{eqc}
\item $\dst{\G}$ is right $\Hom{\D}{-}$-exact.
\item $\dst{\G}=0$.
\item $\G$ is right $\Hom{\D}{-}$-exact.
\end{eqc}
\end{prp}

\begin{cor}\label{right}
Suppose that $\D$ is closed under direct summands. Then the following statements are equivalent for an object $M$ in $\A$.
\begin{eqc}
\item $\DHom{M}{-}$ is right $\Hom{\D}{-}$-exact.
\item $\DHom{M}{N}=0$ for each object $N$ in $\A$.
\item $M\in\D$.
\end{eqc}
\end{cor}

\section{A cohomology theory based on stable functors}\label{sec3}
\noindent
In this section we focus on the stable functors $\CHom{-}{N}$ and $\DHom{M}{-}$, and introduce a cohomology theory based on these functors. Note that there are equalities $\CHom{M}{N}=\Hom{M}{N}/{\C}\hspace{-0.5mm}\Hom{M}{N}$ and $\DHom{M}{N}=\Hom{M}{N}/{\D}\hspace{-0.5mm}\Hom{M}{N}$ for all objects $M$ and $N$ in $\A$ by Proposition \ref{stable hom}.

\begin{lem}\label{fsums}
The following statements hold:
\begin{prt}
\item Let $N$ and $N'$ be objects in $\A$. Then for each object $M\in \A$, there is a natural isomorphism $\CHom{M}{N\oplus N'}\cong\CHom{M}{N}\oplus\CHom{M}{N'}$.
\item Let $M$ and $M'$ be objects in $\A$. Then for each object $N\in \A$, there is a natural isomorphism $\DHom{M\oplus M'}{N}\cong\DHom{M}{N}\oplus\DHom{M'}{N}$.
\end{prt}
\end{lem}

\begin{prf*}
We only prove (a); the statement (b) is proved dually.

Let $0 \to M \to I\to \suspc M \to 0$ be a special $\C$-preenvelope of $M$. Then by Theorem \ref{thm1}, one obtains the following commutative diagram with exact rows:
\begin{equation*}
\scalebox{0.8}[0.8]{\xymatrix@C=10pt@R=15pt{\Hom{I}{N\oplus N'}\ar[d]_{\cong}\ar[r]&\Hom{M}{N\oplus N'}\ar[d]_{\cong}\ar[r] &\CHom{M}{N\oplus N'}\ar@{.>}[d]_{\eta}\ar[r]^{}& 0 & \\
\Hom{I}{N}\oplus\Hom{I}{N'}\ar[r]&\Hom{M}{N}\oplus\Hom{M}{N'} \ar[r]  &\CHom{M}{N}\oplus\CHom{M}{N'}\ar[r]^{}& 0,}}
\end{equation*}
where $\eta$ is obtained by the universal property of cokernels. Moreover, $\eta$ is an isomorphism by the Five Lemma.
\end{prf*}

\begin{ipg}\label{unique}
Let $M$ and $N$ be objects in $\A$. Adopt the notation from \ref{1.3}. There is an induced morphism $\alpha: \suspc M \to \suspc' M$, and one has an isomorphism $$\left(\begin{smallmatrix}\tau &\lambda\\ g' &-\alpha\end{smallmatrix}\right): I'\oplus\suspc M\rightarrow I\oplus\suspc' M$$
by the proof of Proposition \ref{cor2.4}. Fix a proper $\C$-coresolution
$$0 \to N \to J^0 \to J^1 \to \cdots$$
of $N$. Then one gets the following commutative diagram with exact rows:
$$\xymatrix@C=10pt@R=15pt{\Hom{J^1}{I'\oplus\suspc M} \ar[d]^{\cong} \ar[r] & \Hom{J^0}{I'\oplus\suspc M} \ar[d]^{\cong} \ar[r] &\mathrm{L}_{0}^{\C}\hspace{-0.5mm}\Hom{N}{I'\oplus\suspc M}\ar@{.>}[d]_{\phi}\ar[r]^{}& 0 & \\
\Hom{J^1}{I\oplus\suspc' M}\ar[r]&\Hom{J^0}{I\oplus\suspc' M} \ar[r]  &\mathrm{L}_{0}^{\C}\hspace{-0.5mm}\Hom{N}{I\oplus\suspc' M}\ar[r]^{}& 0,}$$
where $\phi$ is obtained by the universal property of cokernels. Moreover, $\phi$ is an isomorphism by the Five Lemma. Furthermore, one obtains the following commutative diagram with exact rows by the definition of $\C$-stable functors:
$$\xymatrix@C=10pt@R=15pt{\mathrm{L}_{0}^{\C}\hspace{-0.5mm}\Hom{N}{I'\oplus\suspc M}\ar[d]^{\cong}_{\phi}\ar[r]&\Hom{N}{I'\oplus\suspc M}\ar[d]^{\cong}\ar[r] &\CHom{N}{I'\oplus\suspc M}\ar@{.>}[d]_{\psi}\ar[r]^{}& 0 & \\
\mathrm{L}_{0}^{\C}\hspace{-0.5mm}\Hom{N}{I\oplus\suspc' M}\ar[r]&\Hom{N}{I\oplus\suspc' M} \ar[r]  &\CHom{N}{I\oplus\suspc' M}\ar[r]^{}& 0,}$$
where $\psi$ is obtained by the universal property of cokernels. Again by the Five Lemma, $\psi$ is an isomorphism. It follows from Corollary \ref{cor1} and Lemma \ref{fsums} that the morphism from $\CHom{N}{\suspc M}$ to $\CHom{N}{\suspc' M}$ obtained by composition with $\psi$ is an isomorphism.

Dually, one gets that the morphism from $\DHom{{\sf\Omega'}^{\D}N}{M}$ to $\DHom{{\sf\Omega}^{\D}N}{M}$ is an isomorphism.
\end{ipg}

\begin{lem}\label{map}
Let $M$ and $N$ be objects in $\A$. Then there is a natural morphism
$$\Delta_{1}: \CHom{M}{N}\to\CHom{\suspc M}{\suspc N}.$$
Moreover, it is independent of the choices of $\C$-cosyzygies, that is, if\: $0 \to M \to I' \to \suspc' M \to 0$ and $0\to N\to J' \to \suspc' N\to 0$ are another special $\C$-preenvelopes, then there exists a commutative diagram with columns isomorphisms
\begin{equation*}\label{3.1}
\xymatrix{
  \CHom{M}{N} \ar@{=}[d] \ar[r]^{\Delta_{1}\ \ \ \ } & \CHom{\suspc M}{\suspc N} \ar[d]_{}^{\cong} \\
  \CHom{M}{N} \ar[r]^{\Delta_{1}'\ \ \ \ } & \CHom{\suspc' M}{\suspc' N}.}
\end{equation*}
\end{lem}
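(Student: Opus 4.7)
The strategy is to construct $\Delta_{1}$ by the standard lifting argument for preenvelopes, check that it descends to the stable Hom, and then settle independence of $\C$-cosyzygies using Corollary~\ref{unique} and Lemma~\ref{cor2.4}.

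First I would fix special $\C$-preenvelopes $0\to M\to I^{0}\xra{\pi}\suspc M\to 0$ and $0\to N\xra{\iota}J^{0}\to\suspc N\to 0$. For any morphism $f\colon M\to N$, the composite $\iota f\colon M\to J^{0}$ extends along $M\to I^{0}$ to some $\widetilde{f}\colon I^{0}\to J^{0}$, because $J^{0}\in\C$ and $\suspc M\in{}^{\perp}\C$ force $\Ext{1}{\suspc M}{J^{0}}=0$. The universal property of cokernels yields a morphism $\widebar{f}\colon\suspc M\to\suspc N$, and I set $\Delta_{1}([f])=[\widebar{f}]\in\CHom{\suspc M}{\suspc N}$.

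Next I would verify well-definedness. Any two lifts of $\iota f$ differ by a morphism $I^{0}\to J^{0}$ vanishing on $M$, which factors through $\pi$; the induced difference on cokernels is a morphism $\suspc M\to\suspc N$ factoring through $J^{0}\in\C$, and thus vanishes in $\CHom{\suspc M}{\suspc N}$. If instead $f=hg$ with $g\colon M\to C$, $h\colon C\to N$, $C\in\C$, then $g$ extends to some $\widetilde{g}\colon I^{0}\to C$ by the same $\mathrm{Ext}^{1}$-vanishing, and $\widetilde{f}=\iota h\widetilde{g}$ is a valid lift; composing with $J^{0}\to\suspc N$ gives $0$ since the composite $N\to J^{0}\to\suspc N$ is zero, so in fact $\widebar{f}=0$ already in $\Hom{\suspc M}{\suspc N}$. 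Hence $\Delta_{1}$ descends to $\CHom{M}{N}$, and naturality in $M$ and $N$ follows from the functoriality of the construction.

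For independence of choices, given another pair of special $\C$-preenvelopes producing $\suspc'M$ and $\suspc'N$, I would combine Corollary~\ref{unique} (applied with $\suspc M$ in place of $M$) and Lemma~\ref{cor2.4} (applied to $\F=\Hom{-}{\suspc'N}$) to obtain isomorphisms $\CHom{\suspc M}{\suspc N}\cong\CHom{\suspc M}{\suspc'N}\cong\CHom{\suspc'M}{\suspc'N}$, which compose to the right vertical isomorphism of the displayed diagram. Commutativity then reduces to the identity $[\beta\widebar{f}]=[\widebar{f'}\alpha]$ in $\CHom{\suspc M}{\suspc'N}$, where $\alpha\colon\suspc M\to\suspc'M$ and $\beta\colon\suspc N\to\suspc'N$ are the standard comparison maps lifting $\mathrm{id}_{M}$ and $\mathrm{id}_{N}$. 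Both $(J^{0}\to J'^{0})\widetilde{f}$ and $\widetilde{f'}(I^{0}\to I'^{0})$ are lifts of $M\to J'^{0}$ along $M\to I^{0}$, so by the uniqueness-up-to-$\C$ established above they induce the same class on cokernels. The main obstacle is this last verification: $\alpha$ and $\beta$ are only canonical modulo morphisms factoring through $\C$, so the diagram must be tracked entirely in the stable Hom quotient; once this is done carefully, commutativity follows from the construction.
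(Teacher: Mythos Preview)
Your argument is correct: the direct lifting construction you give is the classical way to produce $\Delta_{1}$, and your verification that it descends to the stable Hom and is independent of choices is sound. However, the paper proceeds along a different route. Instead of lifting $f$ explicitly, the paper applies $\Hom{-}{N}$ to the preenvelope of $M$ to obtain a monomorphism $\overline{\delta_1}\colon \CHom{M}{N}\hookrightarrow\Ext{1}{\suspc M}{N}$ (using Theorem~\ref{thm1}), and then applies $\Hom{\suspc M}{-}$ to the preenvelope of $N$ to obtain an epimorphism $\beta_1\colon\Ext{1}{\suspc M}{N}\twoheadrightarrow\CHom{\suspc M}{\suspc N}$; the map is defined as the composite $\Delta_1=\beta_1\overline{\delta_1}$, and naturality is read off from naturality of the connecting homomorphisms.

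The two constructions give the same map (your lift $\widetilde{f}$ is precisely a witness for the extension class $\overline{\delta_1}([f])$, and $\beta_1$ is inverse to the pullback connecting map $\partial_1$), so either is adequate for the lemma itself. The payoff of the paper's approach is structural: the factorization $\Delta_1=\beta_1\overline{\delta_1}$ through $\Ext{1}{\suspc M}{N}$, with $\overline{\delta_1}$ monic and $\beta_1$ epic, is exactly what is exploited in Construction~\ref{diagram} to build the commutative diagram~(\ref{big}) and hence to prove Theorem~\ref{main}. Your elementary argument is cleaner for this lemma in isolation, but if you adopt it you will still need to recover the $\mathrm{Ext}^1$ factorization later.
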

\begin{prf*}
Fix a special $\C$-preenvelope $0 \to M \xra{\sigma} I^{0} \to \suspc M \to 0$ of $M$. Applying the functor $\Hom{-}{N}$ to the above sequence and using Theorem \ref{thm1}, one gets the next diagram with the row exact and $\delta_1=\overline{\delta_1}\pi_1$:
\begin{equation*}\label{3.3}
\tag{\ref{map}.1}
\xymatrix@C=8pt@R=10pt{\Hom{I^{0}}{N} \ar[r]^{\sigma^\ast} &\Hom{M}{N}\ar@{>>}[rd]^{\pi_1}
\ar[rr]^{\delta_1} &&\Ext{1}{\suspc M}{N} \ar[r] & \Ext{1}{I^0}{N}.\\
&&\CHom{M}{N}\ar@{>->}^{\overline{\delta_1}}[ru]}
\end{equation*}
For objects $\suspc M$ and $N$, fix special $\C$-preenvelopes $0\to \suspc M \xra{\tau} I^{1} \to \suspc^{2} M \to 0$ and $0 \to N \to J^{0} \xra{\varsigma} \suspc N \to 0$. Consider the following commutative diagram with exact rows and columns:
\begin{equation*}\label{dia3.2}
\tag{\ref{map}.2}
\xymatrix@C=15pt@R=15pt{
\Hom{I^{1}}{J^{0}}\ar[d]_{}\ar[r]^{\tau\ast}&\Hom{\suspc {M}}{J^{0}} \ar[d]_{\varsigma_\ast} &&\\
\Hom{I^{1}}{\suspc {N}}\ar[r]^{}  & \Hom{\suspc {M}}{\suspc {N}}\ar@{>>}[r]^{\pi_2} \ar[d]_{\partial_1} &\CHom{\suspc {M}}{\suspc {N}}.\\
&\Ext{1}{\suspc M}{N}\ar@{-->>}_{\beta_1}[ru]}
\end{equation*}
Since both $\suspc M$ and $\suspc^2 M$ are in $^\perp\C$ (see Remark \ref{rmk}), and $J^0$ is in $\C$, one has
$$\Ext{1}{\suspc M}{J^0}=0=\Ext{1}{\suspc^2 M}{J^0},$$
and hence $\partial_1$ and $\tau^\ast$ are epimorphisms. One has $\pi_2\varsigma_\ast\tau^\ast=0$, so $\pi_2\varsigma_\ast=0$. Thus by the universal property of cokernels, one gets an epimorphism $\beta_1: \Ext{1}{\suspc M}{N} \to \CHom{\suspc {M}}{\suspc {N}}$ such that $\beta_1\partial_1=\pi_2$. Thus $\Delta_{1}=\beta_1\overline{\delta_1}$ is the desired morphism from $\CHom{M}{N}$ to $\CHom{\suspc M}{\suspc N}$, which is natural as the connecting morphisms $\delta_1$ and $\partial_1$ are natural.

Finally the existence of the commutative diagram in the statement follows a standard argument, and the vertical arrow is an isomorphism; see Proposition \ref{cor2.4} and \ref{unique}.
\end{prf*}

Continuing the construction in Lemma \ref{map}, one gets a sequence
\begin{equation*}
\CHom{M}{\suspc^{n}N} \xra{\Delta_{1}} \CHom{\suspc M}{\suspc^{1+n}N} \xra{\Delta_{2}}\CHom{\suspc^{2}M}{\suspc^{2+n}N} \to \cdots.
\end{equation*}
So we have the next definition.

\begin{dfn}\label{complete cohomology}
Let $M$ and $N$ be objects in $\A$. For each $n\in\ZZ$, the $n$th \emph{complete cohomology} of $M$ and $N$ with respect to $\C$ is defined as
\begin{equation*}
\Text[\C]{n}{M}{N}=\mathrm{colim}_{i}\CHom{\suspc^{i}M}{\suspc^{i+n}N}.
\end{equation*}
\end{dfn}

\begin{rmk}
It is easy to see that $\Text[\C]{n}{-}{N}$ is a contravariant additive functor from $\A$ to $\sf Ab$. Specially, if $\A$ has enough injectives and $\C$ is the subcategory of injectives, then $\Text[\C]{n}{M}{N}$ is the cohomology group given by Nucinkis in \cite{BN98}.
\end{rmk}

The following result is dual to Lemma \ref{map}.

\begin{lem}\label{map1}
Let $M$ and $N$ be objects in $\A$. Then there is a natural morphism
$$\Lambda_{1}: \DHom{M}{N}\to\DHom{{\sf\Omega}^{\D}M}{{\sf\Omega}^{\D}N}.$$
Moreover, it is independent of the choices of $\D$-syzygies, that is, if\: $0\to {\sf\Omega'}^{\D}M\to Q' \to M\to 0$ and $0 \to {\sf\Omega'}^{\D}N \to P' \to N \to 0$ are both special $\D$-precovers, then there exists a commutative diagram with columns isomorphisms
\begin{equation*}
\xymatrix{
  \DHom{M}{N} \ar@{=}[d] \ar[r]^{\Lambda_{1}\ \ \ \ } & \DHom{{\sf\Omega}^{\D}M}{{\sf\Omega}^{\D}N} \ar[d]_{}^{\cong} \\
  \DHom{M}{N} \ar[r]^{\Lambda_{1}'\ \ \ \ \ } & \DHom{{\sf\Omega'}^{\D}M}{{\sf\Omega'}^{\D}N}.}
\end{equation*}
\end{lem}

Continuing the construction in Lemma \ref{map1}, one gets a sequence
\begin{equation*}
\DHom{{\sf\Omega}^{\D}_{n}M}{N} \xra{\Lambda_{1}} \DHom{{\sf\Omega}^{\D}_{1+n}M}{{\sf\Omega}^{\D}N} \xra{\Lambda_{2}}\DHom{{\sf\Omega}^{\D}_{2+n}M}{{\sf\Omega}^{\D}_{2}N} \to \cdots.
\end{equation*}
So we have the next definition.

\begin{dfn}\label{w-complete cohomology}
Let $M$ and $N$ be objects in $\A$. For each $n\in\ZZ$, the $n$th \emph{complete cohomology} of $M$ and $N$ with respect to $\D$ is defined as
\begin{equation*}
\Cext[\D]{n}{M}{N}=\mathrm{colim}_{i}\DHom{\mathsf{\Omega}^{\D}_{i+n}M}{\mathsf{\Omega}^{\D}_{i}N}.
\end{equation*}
\end{dfn}

\begin{rmk}
It is easy to see that $\Cext[\D]{n}{M}{-}$ is a covariant additive functor from $\A$ to $\sf Ab$. Specially, if $\A$ has enough projectives and $\D$ is the subcategory of projectives, then $\Cext[\D]{n}{M}{N}$ is the cohomology group given in \cite{BC92}.
\end{rmk}

\begin{bfhpg}[\bf Construction]\label{diagram}
For objects $M$ and $N$ in $\A$, we construct the following commutative diagrams
\begin{equation*}\label{big}\tag{\ref{diagram}.1}
\scalebox{0.6}[0.7]{\xymatrix@C=7pt@R=15pt{
                &    \CHom{M}{N} \ar@{>}[dd]_{\raise5ex\hbox{\tiny $\cong$}} \ar@{>->}^-{\overline{\delta_1}}[dr] \ar[rr]^{\Delta_{1}}
                &  & \CHom{\suspc M}{\suspc N} \ar@{>}[dd]_{\raise5ex\hbox{\tiny $\cong$}} \ar@{>->}^-{\overline{\delta_2}}[dr] \ar[rr]^{\Delta_{2}}
                &  & \CHom{\suspc^{2}M}{\suspc^{2}N} \ar@{>->}^-{\overline{\delta_3}}[dr]\ar@{>}[dd]_{\raise5ex\hbox{\tiny $\cong$}} \ar[r]^{} & \cdots\\
                &  & \mathrm{Ext}^{1}_{\A}(\suspc M,N) \ar[rr]^{\ \ \ \ \ \ \ \Psi_2}\ar@{>>}[ur]^{\beta_1}\ar[dr]^{\mu_1}
                &  & \mathrm{Ext}^{1}_{\A}(\suspc^{2}M,\suspc N)  \ar[rr]^{\ \ \ \ \ \ \ \Psi_{3}}\ar@{>>}[ur]^{\beta_2}\ar[dr]^{\mu_2}
                &&  \mathrm{Ext}^{1}_{\A}(\suspc^{3}M,\suspc^{2} N) \ar[r]&\cdots
                &  & \\
  &    \sa^{-1}_{\C}\mathrm{Ext}^{1}_{\A}(M, N) \ar@{>->}^{\iota_1}[ur] \ar[rr]^{\Phi_{1}}
  &  & \sa^{-1}_{\C}\mathrm{Ext}^{1}_{\A}(\suspc M, \suspc N)  \ar@{>->}^{\iota_2}[ur] \ar[rr]^{\Phi_{2}}
  &  & \sa^{-1}_{\C}\mathrm{Ext}^{1}_{\A}(\suspc^{2} M, \suspc^{2} N)  \ar@{>->}^{\iota_3}[ur]\ar[r]_{}
  & \cdots,}}
\end{equation*}

\begin{equation*}\label{big}\tag{\ref{diagram}.2}
\scalebox{0.6}[0.7]{\xymatrix@C=7pt@R=15pt{
                &    \DHom{M}{N} \ar@{>}[dd]_{\raise5ex\hbox{\tiny $\cong$}} \ar@{>->}^-{}[dr] \ar[rr]^{}
                &  & \DHom{{\sf\Omega}^{\D}M}{{\sf\Omega}^{\D}N} \ar@{>}[dd]_{\raise5ex\hbox{\tiny $\cong$}} \ar@{>->}^-{}[dr] \ar[rr]^{}
                &  & \DHom{{\sf\Omega}^{\D}_{2}M}{{\sf\Omega}^{\D}_{2}N} \ar@{>->}^-{}[dr]\ar@{>}[dd]_{\raise5ex\hbox{\tiny $\cong$}} \ar[r]^{} & \cdots\\
                &  & \mathrm{Ext}^{1}_{\A}(M,{\sf\Omega}^{\D}N) \ar[rr]^{\ \ \ \ \ }\ar@{>>}[ur]^{}\ar[dr]^{}
                &  & \mathrm{Ext}^{1}_{\A}({\sf\Omega}^{\D}M,{\sf\Omega}^{\D}_{2}N)  \ar[rr]^{\ \ \ \ \ \ \ }\ar@{>>}[ur]^{}\ar[dr]^{}
                &&  \mathrm{Ext}^{1}_{\A}({\sf\Omega}^{\D}_{2}M,{\sf\Omega}^{\D}_{3}N) \ar[r]&\cdots
                &  & \\
  &    \sa^{-1}_{\D}\mathrm{Ext}^{1}_{\A}(M, N) \ar@{>->}^{}[ur] \ar[rr]^{}
  &  & \sa^{-1}_{\D}\mathrm{Ext}^{1}_{\A}({\sf\Omega}^{\D}M, {\sf\Omega}^{\D}N)  \ar@{>->}^{}[ur] \ar[rr]^{}
  &  & \sa^{-1}_{\D}\mathrm{Ext}^{1}_{\A}({\sf\Omega}^{\D}_{2}M, {\sf\Omega}^{\D}_{2}N)  \ar@{>->}^{}[ur]\ar[r]_{}
  & \cdots.}}
\end{equation*}
\end{bfhpg}

We construct the diagram (\ref{diagram}.1); the diagram (\ref{diagram}.2) is constructed
similarly.

Adopt the setup and the notation from the proof of Lemma \ref{map}. One gets the monomorphism $\overline{\delta_1}$ and epimorphism $\beta_1$ with $\Delta_1=\beta_1\overline{\delta_1}$. Similarly, one gets the monomorphism $\overline{\delta_2}$ and epimorphism $\beta_2$ with $\Delta_2=\beta_2\overline{\delta_2}$.

Fix special $\C$-preenvelopes $0\to\suspc M\xra{\tau} I^{1}\to\suspc^{2}M\to 0$ and $0\to N\to J^{0}\xra{\varsigma}\suspc N\to 0$. Consider the next commutative diagram with exact rows and columns:
$$\xymatrix@C=15pt@R=15pt{
\Hom{I^{1}}{J^{0}} \ar@{>>}[d]_{\tau^\ast} \ar[r]^{} & \Hom{I^{1}}{\suspc {N}} \ar[d]_{}         &   &\\
\Hom{\suspc{M}}{J^{0}} \ar[r]^{\varsigma_\ast\ \ }  & \Hom{\suspc{M}}{\suspc{N}}\ar@{>>}[r]^{\ \partial_1} \ar[d]_{\delta_2} &\text{Ext}^{1}_{\A}(\suspc{M}, N)\ar@{-->}[dl]^{\Psi_2}\\
 & \Ext{1}{\suspc^2 M}{\suspc N}\ar[d]_{\lambda} &\\
& \Ext{1}{I^1}{\suspc N}. }$$
Here both $\partial_1$ and $\tau^\ast$ are epimorphisms, as $\suspc M$ and $\suspc^2 M$ are in $^\perp\C$ and $J^0\in\C$. It is easy to see $\delta_2\varsigma_\ast=0$, so one gets a morphism
$$\Psi_2: \Ext{1}{\suspc M}{N} \to \Ext{1}{\suspc^2 M}{\suspc N}$$
such that $\Psi_2\partial_1=\delta_2$. By the diagram (\ref{dia3.2}), one has $\overline{\delta_2}\beta_1\partial_1=\overline{\delta_2}\pi_2=\delta_2$. So one gets $\Psi_2=\overline{\delta_2}\beta_1$, as $\partial_1$ is an epimorphism. Similar as above, one obtains $\Psi_3=\overline{\delta_3}\beta_2$.

By the definition of left satellite functors, $\sa^{-1}_{\C}\Ext{1}{M}{N}$ is the kernel of the morphism from $\Ext{1}{\suspc M}{N}$ to $\Ext{1}{I^0}{N}$, so the morphism $\iota_1$ in (\ref{diagram}.1) is the natural embedding, and so are $\iota_2$ and $\iota_3$. Consider the next diagram
$$\xymatrix@C=15pt@R=20pt{
&& \Ext{1}{\suspc M}{N} \ar[d]_{\Psi_2}\ar@{-->}[dl]_{\mu_1}\\
& \sa_{\C}^{-1}\Ext{1}{\suspc M}{\suspc N} \ar@{>->}[r]^{\ \ \iota_2} &\Ext{1}{\suspc^2 M}{\suspc N} \ar[r]^{\lambda} &\Ext{1}{I^1}{\suspc N}. }$$
Since $\lambda\Psi_2\partial_1=\lambda\delta_2=0$ and $\partial_1$ is an epimorphism, one has $\lambda\Psi_2=0$. Thus there is a morphism $\mu_1: \Ext{1}{\suspc M}{N} \to \sa_{\C}^{-1}\Ext{1}{\suspc M}{\suspc N}$ such that $\iota_2\mu_1=\Psi_2$. Set $\Phi_1=\mu_1\iota_1$. Similarly, one gets the morphism $\mu_2$ in (\ref{diagram}.1) with $\iota_3\mu_2=\Psi_3$. Set $\Phi_2=\mu_2\iota_2$.

Finally, consider the following commutative diagram with exact rows:
$$\xymatrix@C=15pt@R=15pt{0\ar[r] & \CHom{M}{N}\ar@{.>}[d]_{\eta_{1}} \ar[r]^{\overline{\delta_1}} &\Ext{1}{\suspc M}{N}\ar@{=}[d]\ar[r]^{}& \Ext{1}{I^0}{N}\ar@{=}[d] & \\
0\ar[r]&\sa_{\C}^{-1}\Ext{1}{M}{N}\ar[r]^{\iota_1}&\Ext{1}{\suspc M}{N}\ar[r]^{}& \Ext{1}{I^0}{N},}$$
where the first exact sequence is given in the diagram (\ref{3.3}), the morphism $\eta_{1}$ is obtained by the universal property of kernels. So one has $\iota_1\eta_{1}=\overline{\delta_1},$ moreover, it follows from the Five Lemma that $\eta_{1}$ is an isomorphism. Similarly, one gets the second and third vertical isomorphisms $\eta_{2}: \CHom{\suspc M}{\suspc N}\to\sa^{-1}_{\C}\mathrm{Ext}^{1}_{\A}(\suspc M, \suspc N)$ and $\eta_{3}: \CHom{\suspc^{2} M}{\suspc^{2} N}\to\sa^{-1}_{\C}\mathrm{Ext}^{1}_{\A}(\suspc^{2} M, \suspc^{2} N)$ in (\ref{diagram}.1) such that $\iota_2\eta_{2}=\overline{\delta_2}$ and $\iota_3\eta_{3}=\overline{\delta_3}$. It implies that the triangles on the right side of the first, second and third vertical isomorphisms in (\ref{diagram}.1) are commutative, respectively.

Since $\iota_2\eta_{2}\beta_{1}=\overline{\delta_2}\beta_{1}=\Psi_2=\iota_2\mu_1$ and $\iota_2$ is a monomorphism, one has $\eta_{2}\beta_{1}=\mu_1$, which implies that the triangle on the left side of the second vertical isomorphism in (\ref{diagram}.1) is commutative. Similarly, one obtains the triangle on the left side of the third vertical isomorphism in (\ref{diagram}.1) is commutative, that is $\eta_{3}\beta_{2}=\mu_2$. Hence all the squares in (\ref{diagram}.1) are commutative.

Now continuing this construction one gets the commutative diagram (\ref{diagram}.1).

\begin{thm}\label{main}
Let $M$ and $N$ be objects in $\A$. For each $n\in\ZZ$ there exist natural isomorphisms
\begin{equation*}
\Text[\C]{n}{M}{N} \cong \mathrm{colim}_{i} \mathrm{Ext}^{1}_{\A}(\suspc^{i+1}M, \suspc^{i+n}N)\cong \mathrm{colim}_{i}\sa^{-1}_{\C}\mathrm{Ext}^{1}_{\A}(\suspc^{i}M, \suspc^{i+n}N)
\end{equation*}
and
\begin{equation*}
\Cext[\D]{n}{M}{N} \cong \mathrm{colim}_{i} \mathrm{Ext}^{1}_{\A}({\sf\Omega}^{\D}_{i+n}M, {\sf\Omega}^{\D}_{i+1}N)\cong \mathrm{colim}_{i}\sa^{-1}_{\D}\mathrm{Ext}^{1}_{\A}({\sf\Omega}^{\D}_{i+n}M, {\sf\Omega}^{\D}_{i}N).
\end{equation*}
\end{thm}
\begin{prf*}
Since the functor $\mathrm{colim}_{i}$ is exact, one gets the first isomorphism by the commutative diagram (\ref{diagram}.1). The isomorphism
$$\Text[\C]{n}{M}{N}\cong \mathrm{colim}_{i}\sa^{-1}_{\C}\mathrm{Ext}^{1}_{\A}(\suspc^{i}M, \suspc^{i+n}N)$$
holds again by (\ref{diagram}.1). The remaining isomorphisms can be proved similarly.
\end{prf*}

\begin{bfhpg}[\bf Construction]\label{A4}
Let $M$ and $N$ be objects in $\A$ and $n\in\ZZ$. For each $k\geq 1$, the exact sequence $0 \to \suspc^{k}M \to I^{k} \to \suspc^{k+1}M \to 0$ yields an exact sequence
\begin{equation*}
\Ext[\A]{n+k}{\suspc^{k}M}{N} \xra{\delta} \Ext[\A]{n+k+1}{\suspc^{k+1}M}{N} \to \Ext[\A]{n+k+1}{I^k}{N}\:.
\end{equation*}
The connecting morphism $\delta$ induces a morphism from $\Ext[\A]{n+k}{\suspc^{k}M}{N}$ to the kernel $\sa_{\C}^{-1}\Ext[\A]{n+k+1}{\suspc^{k}M}{N}\is\sa_{\C}^{-(k+1)}\Ext[\A]{n+k+1}{M}{N}$; see Remark \ref{A2}. Composed with the natural embedding from ${\sa_{\C}^{-k}}\Ext[\A]{n+k}{M}{N}\is{\sa_{\C}^{-1}}\Ext[\A]{n+k}{\suspc^{k-1}M}{N}$ to $\Ext[\A]{n+k}{\suspc^{k}M}{N}$ one gets a morphism
\begin{equation*}
\delta: {\sa_{\C}^{-k}}\Ext[\A]{n+k}{M}{N} \to \sa_{\C}^{-(k+1)}\Ext[\A]{n+k+1}{M}{N}.
\end{equation*}

Similarly, consider the connected sequence $\Ext[A]{\ast}{M}{-}$ of covariant functors, one gets a morphism
$$\partial:\sa_{\D}^{-k}\Ext[A]{n+k}{M}{N} \to \sa_{\D}^{-(k+1)}\Ext[\A]{n+k+1}{M}{N}.$$
\end{bfhpg}

As constructed in \ref{A4}, $\{\sa^{-i}_{\C}\mathrm{Ext}^{n+i}_{\A}(M, N)\}_{i\geq 1}$ and $\{\sa^{-j}_{\D}\mathrm{Ext}^{n+j}_{\A}(M, N)\}_{j\geq 1}$ are direct systems. Then we have the next result.

\begin{prp}\label{MC-ISO}
Suppose that $\Ext{\geq 1}{^\perp\C}{\C}=0=\Ext{\geq 1}{\D}{\D^\perp}$. Let $M$ and $N$ be objects in $\A$. Then for each $n\in\ZZ$, there are natural isomorphisms
\begin{equation*}
\Text[\C]{n}{M}{N} \cong \mathrm{colim}_{i}\sa^{-i}_{\C}\mathrm{Ext}^{n+i}_{\A}(M, N)
\end{equation*}
and
\begin{equation*}
\Cext[\D]{n}{M}{N} \cong \mathrm{colim}_{j}\sa^{-j}_{\D}\mathrm{Ext}^{n+j}_{\A}(M, N).
\end{equation*}
\end{prp}
\begin{prf*}
We prove the first isomorphism; the second one is proved dually.

Fix an integer $n$. For each $i\geq\max\{1, 1-n\}$, one gets a natural isomorphism
$$\Ext{n+i}{\suspc^i M}{N}\cong\Ext{1}{\suspc^i M}{\suspc^{i+n-1}N}$$
by dimension shifting as $\Ext{\geq 1}{^\perp\C}{\C}=0$. Then it follows from (\ref{dia3.2}) that there exists an epimorphism
\begin{equation*}
\beta_i: \Ext{n+i}{\suspc^i M}{N}\to\CHom{\suspc^{i}M}{\suspc^{i+n}N}.
\end{equation*}

Applying functor $\Hom{-}{N}$ to the special $\C$-preenvelope
$$0\to\suspc^{i}M\xra{\iota} I^{i}\xra{\pi}\suspc^{i+1}M\to 0,$$
one gets a long exact sequence
\begin{equation*}\label{3.14.1}\tag{\ref{MC-ISO}.1}
\cdots\to\Ext{n+i}{\suspc^{i}M}{N}\xra{\delta_i} \Ext{n+i+1}{\suspc^{i+1}M}{N}\to\Ext{n+i+1}{I^{i}}{N}\to\cdots.
\end{equation*}
This yields that $\{\Ext{n+i}{\suspc^i M}{N}\}_{i\geq\max\{1, 1-n\}}$ is a direct system. The functor $\mathrm{colim}_{i}$ is exact, so one gets an epimorphism
\begin{equation*}
\beta_{M,N}:\mathrm{colim}_{i}\Ext{n+i}{\suspc^i M}{N}\to\mathrm{colim}_{i}\CHom{\suspc^{i}M}{\suspc^{i+n}N}.
\end{equation*}

We next clarify $\beta_{M,N}$ is a monomorphism. Applying functor $\Hom{\suspc^{i}M}{-}$ to the special $\C$-preenvelope
$$0\to\suspc^{i+n-1}N\xra{\tau} J^{i+n-1}\xra{\sigma}\suspc^{i+n}N\to 0,$$
one gets an exact sequence
\begin{equation*}
\Hom{\suspc^{i}M}{J^{i+n-1}}\to\Hom{\suspc^{i}M}{\suspc^{i+n}N}\to \Ext{1}{\suspc^i M}{\suspc^{i+n-1}N}\to 0,
\end{equation*}
as $\suspc^{i}M\in{^\perp\C}$ and $J^{i+n-1}\in\C$. An element in $\Ker(\beta_{M,N})$ can be represented by an element $\overline{\varphi}$ in
\begin{align*}
    \Ext{n+i}{\suspc^i M}{N}
    &\cong\Ext{1}{\suspc^i M}{\suspc^{i+n-1}N}\\
    &\cong\Hom{\suspc^{i}M}{\suspc^{i+n}N}/\mathrm{Im}\Hom{\suspc^{i}M}{\sigma}
  \end{align*}
for some $i$; one writes $\overline{\varphi}=\varphi+\mathrm{Im}\Hom{\suspc^{i}M}{\sigma}$ with $\varphi\in\Hom{\suspc^{i}M}{\suspc^{i+n}N}$. Then it suffices to prove $\delta_i(\overline{\varphi})=0$. It follows from (\ref{dia3.2}) that
\begin{align*}
    \beta_{M,N}(\overline{\varphi})
    &=\beta_{M,N}(\varphi+\mathrm{Im}\Hom{\suspc^{i}M}{\sigma})\\
    &=\varphi+{\C}\hspace{-0.5mm}\Hom{\suspc^{i}M}{\suspc^{i+n}N}\\
    &=0
  \end{align*}
in $\CHom{\suspc^{i}M}{\suspc^{i+n}N}$. This yields that $\varphi$ factors through an object in $\C$, and hence through $I^{i}$ such that $\varphi=\lambda\iota$ for $\lambda \in\Hom{I^{i}}{\suspc^{i+n}N}$. Consider the following commutative diagram with exact rows:
\begin{equation*}
\xymatrix@C=25pt@R=20pt{
&\suspc^{i}M\ar[d]_{\varphi}\ar@{>->}[r]^{\iota}&I^{i}\ar[ld]_{\lambda}\ar[d]_{\psi}\ar@{>>}[r]^{\pi}&\suspc^{i+1}M \ar[d]_{\omega}\ar@{-->}[ld]_{\phi}\\
&\suspc^{i+n}N\ar@{>->}[r]^{\tau}&J^{i+n}\ar@{>>}[r]^{\sigma}&\suspc^{i+n+1}N.}
\end{equation*}
Since $(\psi-\tau\lambda)\iota=0$ and the sequence
$$0\to\Hom{\suspc^{i+1}M}{J^{i+n}}\xra{\pi^{*}}\Hom{I^i}{J^{i+n}}\xra{\iota^{*}} \Hom{\suspc^{i}M}{J^{i+n}}\to 0$$
is exact, one has $\psi-\tau\lambda\in\mathrm{Ker}\iota^{*}=\mathrm{Im}\pi^{*}$. Thus there is a morphism $\phi$ in $\Hom{\suspc^{i+1}M}{J^{i+n}}$ satisfying $\psi-\tau\lambda=\pi^{*}(\phi)=\phi\pi$. So $\sigma\phi\pi=\sigma(\psi-\tau\lambda)=\sigma\psi=\omega\pi$, which yields that $\sigma\phi=\omega$ as $\pi$ is an epimorphism. Hence $$\delta_i(\overline{\varphi})=\delta_i(\varphi+\mathrm{Im}\Hom{\suspc^{i}M}{\sigma})=
\omega+\mathrm{Im}\Hom{\suspc^{i+1}M}{\sigma}.$$
Since $\omega=\sigma\phi=\Hom{\suspc^{i+1}M}{\sigma}(\phi)\in\mathrm{Im}\Hom{\suspc^{i+1}M}{\sigma}$, it follows that $\delta_i(\overline{\varphi})=0$. This implies that $\beta_{M,N}$ is a monomorphism, and hence an isomorphism.

Finally, we prove that there is an isomorphism
$$\mathrm{colim}_{i}\Ext{n+i}{\suspc^i M}{N}\is\mathrm{colim}_{i}\sa_{\C}^{-i}\Ext{n+i}{M}{N}.$$ Consider the exact sequence (\ref{3.14.1}). Then one gets the next equalities:
\begin{align*}
    \mathrm{Im}\delta_i
    &=\mathrm{Ker}(\Ext{n+i+1}{\suspc^{i+1}M}{N}\to\Ext{n+i+1}{I^{i}}{N})\\
    &=\sa_{\C}^{-1}\Ext{n+i+1}{\suspc^{i}M}{N}\\
    &=\sa_{\C}^{-i-1}\Ext{n+i+1}{M}{N}\:,
  \end{align*}
where the last equality follows from Remark \ref{A2}. Passing onto direct limits one gets
$$\mathrm{colim}_{i}\Ext{n+i}{\suspc^i M}{N} \cong\mathrm{colim}_{i}\mathrm{Im}\delta_i =\mathrm{colim}_{i}\sa_{\C}^{-i}\Ext{n+i}{M}{N}.$$
This completes the proof.
\end{prf*}

\begin{rmk}
The first isomorphism in Proposition \ref{MC-ISO} was proved by Nucinkis in \cite[Theorem 3.6]{BN98} for $\C=\sf Inj$ in the category of $R$-modules, and the second one was proved by Kropholler in \cite[Section 3.3]{PHK95} for $\D=\sf Prj$ in the category of $R$-modules; see also Celikbas, Christensen, Liang and Piepmeyer \cite[Appendix B]{LL17}.
\end{rmk}

\section{Stable cohomology}\label{stable}
\noindent
In this section we consider the stable cohomology with respect to a preenveloping/precovering subcategory (not necessarily special). Throughout this section all complexes are cochain complexes of objects in $\A$. We start by recalling the definition of stable cohomology that was first introduced by Goichot \cite{GF92}.

\begin{ipg}\label{stable Hom functor}
For complexes $X$ and $Y$ of objects in $\A$, the symbol $\Hom{X}{Y}$ denotes the complex of abelian groups with the degree-$n$ term
$$\Hom{X}{Y}^{n}=\prod_{i\in\mathbb{Z}}\text{Hom}(X^{i},Y^{n+i})$$
and the differential given by $\partial(\alpha)=\partial_Y\alpha-(-1)^{|\alpha|}\alpha\partial_X$ for a homogeneous element $\alpha$. The \emph{bounded Hom-complex} $\bHom{X}{Y}$ is the subcomplex of $\Hom{X}{Y}$ with degree-$n$ term $$\bHom{X}{Y}^{n}=\coprod_{i\in\mathbb{Z}}\Hom{X^{i}}{Y^{n+i}}.$$
We denote by $\sHom{X}{Y}$ the quotient complex $\Hom{X}{Y}/\bHom{X}{Y}$, which is called \emph{stable Hom-complex}.
\end{ipg}

\begin{dfn}\label{homology}
Let $M$ and $N$ be objects in $\A$ with $M \qra I$ and $N \qra J$ proper $\C$-coresolutions of $M$ and $N$, respectively. For each $n\in\ZZ$, the $n$th \emph{bounded cohomology} of $M$ and $N$ with respect to $\C$ is
  \begin{equation*}
    \bExt[\C]{n}{M}{N} \deq \HH[n]{\bHom{I}{J}}\:,
  \end{equation*}
and the $n$th \emph{stable cohomology} of $M$ and $N$ with respect to $\C$ is
  \begin{equation*}
    \Wext[\C]{n}{M}{N} \deq \HH[n]{\sHom{I}{J}}\:.
  \end{equation*}

Dually, let $P \qra M$ and $Q \qra N$ be proper $\D$-resolutions of $M$ and $N$, respectively. For each $n\in\ZZ$, the $n$th \emph{bounded cohomology} of $M$ and $N$ with respect to $\D$ is
  \begin{equation*}
    \bExt[\D]{n}{M}{N} \deq \HH[n]{\bHom{P}{Q}}\:,
  \end{equation*}
and the $n$th \emph{stable cohomology} of $M$ and $N$ with respect to $\D$ is
  \begin{equation*}
    \Wext[\D]{n}{M}{N} \deq \HH[n]{\sHom{P}{Q}}\:.
  \end{equation*}
\end{dfn}

\begin{rmk}\label{zero}
Any two proper $\C$-coresolutions and proper $\D$-resolutions of $M$ are homotopy equivalent, respectively; see \cite[Section 8.2]{EJ20}. Thus the above definitions of bounded cohomology and stable cohomology are independent of the choices of proper $\C$-coresolutions and proper $\D$-resolutions, respectively.
\end{rmk}

\begin{ipg}
For a complex $X=\cdots \to X^{n-1} \xra{\partial^{n-1}} X^{n} \to X^{n+1} \to X^{n+2} \to \cdots$ of objects in $\A$, the symbol $X^{\supset n}$ denotes the quotient complex
$$\cdots\to 0 \to \Coker\partial^{n-1} \to X^{n+1} \to X^{n+2} \to \cdots,$$
the symbol $X^{\geq n}$ denotes the subcomplex
$$\cdots\to 0 \to X^{n} \to X^{n+1} \to \cdots,$$
and the symbol $\mathsf{\Sigma}^n(-)$ denotes the shift functor.
\end{ipg}

In view of Proposition \ref{stable hom}, it can be proved similarly as in \cite[Theorem 4.4]{BN98} (see also \cite[Appendix B]{LL17}) that stable cohomology is actually the cohomology given in Section \ref{sec3}. In the following we give the proof for the convenience of the reader.

\begin{prp}\label{stabele and complete}
Let $M$ and $N$ be objects in $\A$. Then for all $n\in\ZZ$, there are natural isomorphisms
\begin{equation*}
\Wext[\C]{n}{M}{N} \cong \Text[\C]{n}{M}{N}\ and\ \Wext[\D]{n}{M}{N} \cong \Cext[\D]{n}{M}{N}.
\end{equation*}
\end{prp}
\begin{prf*}
We prove the first isomorphism; the second one is proved dually.

Let $M \qra I$ and $N \qra J$ be proper $\C$-coresolutions of $M$ and $N$, respectively. Suppose that $\widetilde{\mu}$ is an element of $\Wext[\C]{n}{M}{N}$ represented by a morphism $\mu$ of degree $n$, which is a chain map in high degrees, i.e., the following diagram
\begin{equation*}
\xymatrix{
0\ar[r]^{} &\suspc^{i} M\ar[d]_{\widehat{\mu}^{i}}\ar[r]^{}  &I^{i}\ar[d]_{\mu^{i}}\ar[r] &I^{i+1}\ar[r]\ar[d]_{\mu^{i+1}} &\cdots\\
0\ar[r]^{} &\suspc^{i+n} N\ar[r]^{}&J^{i+n} \ar[r]^{} &J^{i+n+1}\ar[r] &\cdots  }
\end{equation*}
is commutative up to a sign $(-1)^n$ for $i\gg0$. Thus $\mu^{i}$ induces a unique element $\widehat{\mu}^{i}\in \Hom{\suspc^{i}M}{\suspc^{i+n}N}$. In this way, $\mu$ defines an element $\widehat{\mu} \in \Text[\C]{n}{M}{N}$ in view of Proposition \ref{stable hom}. In order to show this yields a morphism
\begin{equation*}
\phi^{n} : \Wext[\C]{n}{M}{N}\to \Text[\C]{n}{M}{N},
\end{equation*}
it must be verified that $\widehat{\mu}$ is independent of the choices of representative $\mu$ of $\widetilde{\mu}$ in $\Wext[\C]{n}{M}{N}$. If $\widetilde{\mu} = \widetilde{\nu}$ in $\Wext[\C]{n}{M}{N}$, then $\mu-\nu$ is 0-homotopic in high degrees, so one has the following commutative diagram:
\begin{equation*}
\xymatrix{
0\ar[r]^{} &\suspc^{i} M\ar[d]_{\widehat{\mu}^{i}-\widehat{\nu}^{i}}\ar[r]^{\iota}  &I^{i}\ar[d]_{\mu^{i}-\nu^{i}}\ar[r]^{d^{i}}\ar@{-->}[ld]_{\delta^{i}} &I^{i+1}\ar[r]\ar[d]_{}\ar@{-->}[ld]_{\delta^{i+1}}   &\cdots\\
0\ar[r]^{} &\suspc^{i+n} N\ar[r]^{\iota'}&J^{i+n} \ar[r]^{} &J^{i+n+1}\ar[r] &\cdots }
\end{equation*}
for all $i\gg0$. Since $\iota'(\widehat{\mu}^{i}-\widehat{\nu}^{i}) = (\mu^{i}-\nu^{i})\iota = (\iota'\delta^{i}+\delta^{i+1}d^{i})\iota = \iota'\delta^{i}\iota$ and $\iota'$ is a monomorphism, one has  $\widehat{\mu}^{i}-\widehat{\nu}^{i} = \delta^{i}\iota$. Then $\widehat{\mu}^{i}-\widehat{\nu}^{i}$ factors through $I^{i}\in \C$, whence it is zero in $\CHom{\suspc^{i}M}{\suspc^{i+n}N}$.

We will continue to claim that $\phi^{n}$ is an isomorphism. On one hand, let $\widetilde{\mu}$ be an element of $\Wext[\C]{n}{M}{N}$ such that $\phi^{n}(\widetilde{\mu}) = 0$ in $\Text[\C]{n}{M}{N}$. Then for all $i\gg0$, the induced morphism $\widehat{\mu}^{i} : \suspc^{i}M\to \suspc^{i+n}N$ factors through an object $C\in\C$ and hence through $\iota : \suspc^{i}M \to I^{i}$. So one can construct a morphism $\delta^{k+1} : I^{k+1}\to J^{k+n}$ for each $k\geq i$, so $\mu$ is 0-homotopic in high degrees, i.e., $\widetilde{\mu} = 0$ in $\Wext[\C]{n}{M}{N}$. On the other hand, let $\widehat{\mu}\in \Text[\C]{n}{M}{N}$ be a family of elements in the direct system of $\CHom{\suspc^{i}M}{\suspc^{i+n}N}$. Such a family is represented by an element $\bar{\mu} = \mu + {\C}\hspace{-0.5mm}\Hom{\suspc^{i}M}{\suspc^{i+n}N}$ for some $i\gg0$. Extending $\mu$, one gets a morphism $I^{\geq i} \to \mathsf{\Sigma}^n (J^{\geq i+n})$, which yields an element $\widetilde{\mu}$ in $\Wext[\C]{n}{M}{N}$ with $\phi^{n}(\widetilde{\mu}) = \widehat{\mu}$.
\end{prf*}

Since $\Wext[\C]{n}{M}{N}$ and $\Text[\C]{n}{M}{N}$ (resp., $\Wext[\D]{n}{M}{N}$ and  $\Cext[\D]{n}{M}{N})$ are naturally isomorphic, we will not distinguish these two notations; we use the notation $\Text[\C]{n}{M}{N}$ (resp., $\Cext[\D]{n}{M}{N}$).

\begin{prp}\label{stable exact sequence}
Let $M$ and $N$ be objects in $\A$. Then there are exact sequences
$$\cdots \to \bExt[\C]{i}{M}{N} \to \Ext[\A\C]{i}{M}{N} \to \Text[\C]{i}{M}{N} \to \bExt[\C]{i+1}{M}{N} \to \cdots$$
and
$$\cdots \to \bExt[\D]{i}{M}{N} \to \Ext[\D\A]{i}{M}{N} \to \Cext[\D]{i}{M}{N} \to \bExt[\D]{i+1}{M}{N} \to \cdots.$$
\end{prp}
\begin{prf*}
We prove the first one; the second one is proved dually.

Fix proper $\C$-coresolutions $M \qra I$ and $N \qra J$. Then one gets an exact sequence
  \begin{equation*}\label{eq:ipg1}
  \tag{\ref{stable exact sequence}.1}
  0 \to \bHom{I}{J} \to \Hom{I}{J} \to \sHom{I}{J} \to 0.
  \end{equation*}
It follows from a result by Christensen, Frankild and Holm \cite[Proposition 2.7]{CFH06} that $\Ext[\A\C]{n}{M}{N}\is\HH[n]{\Hom[\A]{I}{J}}$ for all $n\in\ZZ$. Thus (\ref{eq:ipg1}) yields the exact sequence in the statement.
\end{prf*}

\begin{prp}\label{exact1}
The following statements hold:
\begin{prt}
\item Let $0 \to N' \to N \to N'' \to 0$ be a $\Hom{-}{\C}$-exact short exact sequence in $\A$. Then for each object $M$ in $\A$ there is an exact sequence
$$\cdots \to \Text[\C]{n}{M}{N'} \to \Text[\C]{n}{M}{N} \to \Text[\C]{n}{M}{N''} \to \Text[\C]{n+1}{M}{N'} \to \cdots\:.$$
\item Let $0 \to M' \to M \to M'' \to 0$ be a $\Hom{-}{\C}$-exact short exact sequence in $\A$. Then for each object $N$ in $\A$ there is an exact sequence
$$\cdots \to \Text[\C]{n}{M''}{N} \to \Text[\C]{n}{M}{N} \to \Text[\C]{n}{M'}{N} \to \Text[\C]{n+1}{M''}{N} \to \cdots\:.$$
\end{prt}
\end{prp}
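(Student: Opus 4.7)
The plan is to use a Horseshoe-type construction to build compatible proper $\C$-coresolutions whose terms fit into a degreewise split short exact sequence of complexes, and then derive the long exact sequence from the associated short exact sequence of stable $\mathrm{Hom}$-complexes.

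For part (a), I would fix a proper $\C$-coresolution $M\qra I$ together with special proper $\C$-coresolutions $N'\qra J'$ and $N''\qra J''$. The construction carried out in \ref{1.2}, applied inductively to the given $\Hom{-}{\C}$-exact sequence $0\to N'\to N\to N''\to 0$ and to each successive cosyzygy sequence $0\to \suspc^{i}N'\to\suspc^{i}N\to\suspc^{i}N''\to 0$ (whose $\Hom{-}{\C}$-exactness is automatically guaranteed by that same construction), produces a proper $\C$-coresolution $N\qra J$ with $J^{i}=J'^{i}\oplus J''^{i}$ in each degree, together with a short exact sequence of complexes
\[
0\to J'\to J\to J''\to 0
\]
that is \emph{degreewise split}, the splittings being the canonical inclusions and projections of the direct sums.

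Because this sequence splits in each degree, applying $\Hom{I^{k}}{-}$ term by term preserves exactness, and this is preserved by the product and coproduct that define $\Hom{I}{-}$ and $\bHom{I}{-}$. Consequently the functors $\Hom{I}{-}$, $\bHom{I}{-}$, and their quotient $\sHom{I}{-}$ each yield a short exact sequence of complexes of abelian groups. The long exact sequence in cohomology of
\[
0\to \sHom{I}{J'}\to \sHom{I}{J}\to \sHom{I}{J''}\to 0,
\]
combined with Definition \ref{homology} and Remark \ref{zero} (which identifies $\Text[\C]{n}{M}{-}$ with $\HH[n]{\sHom{I}{-}}$), gives the desired long exact sequence.

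Part (b) is proved dually: I would run the same Horseshoe construction on $0\to M'\to M\to M''\to 0$ to obtain compatible proper $\C$-coresolutions $I'$, $I$, $I''$ with $I^{i}=I'^{i}\oplus I''^{i}$ and a degreewise split short exact sequence of complexes, and then apply $\sHom{-}{J}$ for a proper $\C$-coresolution $N\qra J$. The contravariance of the first variable reverses the order of the terms and yields the stated long exact sequence for $\Text[\C]{*}{-}{N}$. The main technical point is the iterated Horseshoe step, namely showing that at every level the resulting cosyzygy sequence remains $\Hom{-}{\C}$-exact so that the construction can continue; this is exactly what \ref{1.2} supplies, via the fact that each cosyzygy $\suspc^{i}(-)$ lies in ${^{\perp}\C}$ (see \ref{syzygy}). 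Once this is in place, the remaining steps are standard diagram chasing and the cohomology long exact sequence.
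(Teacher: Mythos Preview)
Your proposal is correct and follows essentially the same approach as the paper: build a degreewise split short exact sequence of proper $\C$-coresolutions via the Horseshoe lemma (the paper cites \cite[Remark 8.2.2]{EJ20} where you invoke \ref{1.2}), apply $\sHom{-}{-}$ with a fixed coresolution in the other variable, and take the long exact cohomology sequence. Your write-up is more explicit about why the degreewise splitting passes to $\sHom{}{}$, but the strategy is identical.
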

\begin{prf*}
We only prove (a); the statement (b) is proved similarly.

Let $N' \qra I$ and $N'' \qra H$ be proper $\C$-coresolutions of $N'$ and $N''$, respectively. Then $N$ has a proper $\C$-coresolution $N\qra J$ such that there is a degree-wise split exact sequence $0 \to I \to J \to H \to 0$; see \cite[Remark 8.2.2]{EJ20}. Let $M \qra L$ be a proper $\C$-coresolution of $M$. Then the sequence
$$0 \to \sHom{L}{I} \to \sHom{L}{J} \to \sHom{L}{H} \to 0$$
is exact, which yields the exact sequence in the statement.
\end{prf*}

The next result is proved dually.

\begin{prp}\label{exact-dual}
The following statements hold:
\begin{prt}
\item Let $0 \to M' \to M \to M'' \to 0$ be a $\Hom{\D}{-}$-exact short exact sequence in $\A$. Then for each object $N$ in $\A$ there is an exact sequence
$$\cdots \to \Cext[\D]{n}{M''}{N} \to \Cext[\D]{n}{M}{N} \to \Cext[\D]{n}{M'}{N} \to \Cext[\D]{n+1}{M''}{N} \to \cdots\:.$$
\item Let $0 \to N' \to N \to N'' \to 0$ be a $\Hom{\D}{-}$-exact short exact sequence in $\A$. Then for each object $M$ in $\A$ there is an exact sequence
$$\cdots \to \Cext[\D]{n}{M}{N'} \to \Cext[\D]{n}{M}{N} \to \Cext[\D]{n}{M}{N''} \to \Cext[\D]{n+1}{M}{N'} \to \cdots\:.$$
\end{prt}
\end{prp}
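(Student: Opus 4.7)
The plan is to dualize the argument used in Proposition~\ref{exact1}. By the dual of Remark~\ref{zero} (mentioned in the ``Dual results'' paragraph), the $n$th complete cohomology $\Cext[\D]{n}{M}{N}$ equals $\HH[n]{\sHom{P}{Q}}$ for any proper $\D$-resolutions $P\qra M$ and $Q\qra N$. So the whole task is to produce, for each of (a) and (b), a short exact sequence of stable Hom-complexes and then take the induced long exact sequence in cohomology.

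For part~(a), I would start from a $\Hom{\D}{-}$-exact short exact sequence $0\to M'\to M\to M''\to 0$ and fix proper $\D$-resolutions $P'\qra M'$ and $P''\qra M''$. By the dual of the horseshoe construction in \cite[Remark~8.2.2]{EJ20}, one obtains a proper $\D$-resolution $P\qra M$ sitting in a degreewise split short exact sequence of complexes $0\to P'\to P\to P''\to 0$. Fix any proper $\D$-resolution $Q\qra N$. Because the sequence of $P$'s is split in each degree, applying $\Hom{-}{Q^{j}}$ componentwise is exact in every bidegree; hence one gets a short exact sequence of complexes of abelian groups
\begin{equation*}
0\to\Hom{P''}{Q}\to\Hom{P}{Q}\to\Hom{P'}{Q}\to 0,
\end{equation*}
and the same reasoning applies to the bounded subcomplex $\bHom{-}{-}$. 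Passing to the quotient yields the short exact sequence
\begin{equation*}
0\to \sHom{P''}{Q}\to \sHom{P}{Q}\to \sHom{P'}{Q}\to 0,
\end{equation*}
whose long exact sequence in cohomology is exactly the asserted one.

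For part~(b), I would dualize symmetrically: given a $\Hom{\D}{-}$-exact short exact sequence $0\to N'\to N\to N''\to 0$, I fix proper $\D$-resolutions $Q'\qra N'$ and $Q''\qra N''$, construct by the dual horseshoe a proper $\D$-resolution $Q\qra N$ together with a degreewise split short exact sequence $0\to Q'\to Q\to Q''\to 0$, and then for any proper $\D$-resolution $P\qra M$ apply $\sHom{P}{-}$ (again via the intermediate degreewise split sequences $0\to\Hom{P}{Q'}\to\Hom{P}{Q}\to\Hom{P}{Q''}\to 0$ and the bounded analogue) to obtain a short exact sequence of stable Hom-complexes whose cohomology sequence is the claimed one.

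The only step that is not entirely mechanical is the dual horseshoe: one must verify that the hypothesis of $\Hom{\D}{-}$-exactness is exactly what allows the inductive patching of $\D$-resolutions of the outer terms into a proper $\D$-resolution of the middle term fitting into a degreewise split exact sequence of complexes. This is the direct dualization of \cite[Remark~8.2.2]{EJ20}, where the $\Hom{\D}{-}$-exactness provides, at each step, a lift of the required special $\D$-precover map, and thus the main obstacle is purely bookkeeping rather than a genuinely new argument.
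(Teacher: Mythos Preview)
Your proposal is correct and is exactly the dualization the paper intends: the paper gives no separate proof of this proposition, merely listing it among the ``dual results'' of Proposition~\ref{exact1}, whose proof proceeds by the horseshoe construction \cite[Remark~8.2.2]{EJ20} to obtain a degreewise split short exact sequence of proper coresolutions and then passes to the stable Hom-complex. Your argument reproduces this verbatim on the $\D$-resolution side, so there is nothing to add.
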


The following are the vanishing results that were advertised in the introduction; a special case where $\C$ is the subcategory of injectives was first proved by Nucinkis in \cite[Theorem 3.7]{BN98}.

\begin{thm}\label{C-dimension}
Suppose that $\C$ is closed under direct summands. Then for each object $N$ in $\A$, the following statements are equivalent.
\begin{eqc}
\item $\C$-$\mathrm{id}_{\A}N < \infty $.
\item $\Text[\C]{n}{N}{-}=0=\Text[\C]{n}{-}{N}$ for all $n\in\ZZ$.
\item $\Text[\C]{0}{N}{N}=0$.
\end{eqc}
\end{thm}
\begin{prf*}
The implication \eqclbl{ii}$\implies$\eqclbl{iii} is clear.

\proofofimp{i}{ii} Let $M$ be an object in $\A$ with $M\qra J$ a proper $\C$-coresolution. Since $\C$-$\mathrm{id}_{\A}N$ is finite, it follows from Lemma \ref{Chen-dual} that there is a proper $\C$-coresolution $N\qra I$ with $I$ bounded. So one has $\Hom{I}{J}=\bHom{I}{J}$. This implies $$\Text[\C]{n}{N}{M}=0=\Text[\C]{n}{M}{N}$$
for all $n\in\ZZ$.

\proofofimp{iii}{i} Let $N \qra I$ be a proper $\C$-coresolution of $N$. Then one gets $\HH[0]{\sHom{I}{I}}=0$, and so for $\id[I]\in \Hom{I}{I}^0$ one has $$\id[I]+\bHom{I}{I}^0\in\Z[0]{\sHom{I}{I}}=\B[0]{\sHom{I}{I}}.$$
Thus there is a morphism $\varphi\in\Hom{I}{I}^{-1}$ such that $\id[I]-\partial(\varphi)\in\bHom{I}{I}^0$ is bounded. So there is an integer $i\gg0$ such that $\partial_{I}^{i-1}\varphi^i+\varphi^{i+1}\partial_{I}^i=\id[I^i]$. Thus one has $\partial_{I}^{i-1}\varphi^i\partial_{I}^{i-1}=\partial_{I}^{i-1}$, which yields that the epimorphism $\partial_{I}^{i-1}: I^{i-1}\to \Im \partial_{I}^{i-1}$ is split. $\C$ is closed under direct summands, so one has $\Im \partial_{I}^{i-1}\in\C$. Thus $\C$-$\mathrm{id}_{\A}N$ is finite.
\end{prf*}

\begin{cor}\label{unbounded}
Suppose that $\C$ is closed under direct summands. Then for each object $N$ in $\A$ and $n\in\ZZ$, the following statements are equivalent.
  \begin{eqc}
  \item $\Ext[\A\C]{i}{C}{N}=0$ for each object $C\in\C$ and all $i \ge n$.
  \item $\bExt[\C]{i}{-}{N}=0$ for all $i \ge n$.
  \end{eqc}
\end{cor}
\begin{prf*}
\proofofimp{i}{ii} Let $M$ be an object with $M\qra I$ a proper $\C$-coresolution, and let $\alpha: N\qra J$ be a proper $\C$-coresolution of $N$. For each $C\in\C$, the complex $\Hom{C}{(\mathsf{Cone}\ \alpha)^{\supset n-2}}$ is acyclic by $(i)$, where $\mathsf{Cone}\ \alpha$ denotes the mapping cone of $\alpha$. Thus by a result by Celikbas, Christensen, Liang and Piepmeyer \cite[Proposition A.2]{CCLP1}, the complex $\bHom{I}{(\mathsf{Cone}\ \alpha)^{\supset n-2}}$ is acyclic. So for each $i\ge n$ one has $$\bExt[\C]{i}{M}{N}=\HH[i]{\bHom{I}{J}}=\HH[i]{\bHom{I}{(\mathsf{Cone}\ \alpha)^{\supset n-2}}}=0.$$

\proofofimp{ii}{i} For each $C\in\C$ and each $i\in\ZZ$, one has $\Text[\C]{i}{C}{N}=0$ by Theorem \ref{C-dimension}. So $\Ext[\A\C]{i}{C}{N}=0$ for each $i\ge n$ by Proposition \ref{stable exact sequence}.
\end{prf*}

Dually, we have the following two results.

\begin{thm}\label{D-dimension}
Suppose that $\D$ is closed under direct summands. Then for each object $M$ in $\A$, the following statements are equivalent.
\begin{eqc}
\item $\D$-$\mathrm{pd}_{\A}M < \infty $.
\item $\Cext[\D]{n}{-}{M}=0=\Cext[\D]{n}{M}{-}$ for all $n\in\ZZ$.
\item $\Cext[\D]{0}{M}{M}=0$.
\end{eqc}
\end{thm}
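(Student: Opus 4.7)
The plan is to mirror the proof of Theorem \ref{C-dimension} under the standard dualization: proper $\D$-resolutions replace proper $\C$-coresolutions, Lemma \ref{Chen} replaces Lemma \ref{Chen-dual}, and the dual stable-Hom setup introduced at the end of this section replaces the contravariant one used before.

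The implication \eqclbl{ii}$\implies$\eqclbl{iii} will be immediate. For \eqclbl{i}$\implies$\eqclbl{ii} I would use Lemma \ref{Chen} to produce, from the assumption $\D\text{-}\mathrm{pd}_\A M<\infty$, a bounded proper $\D$-resolution $P\qra M$; then for any object $N$ with proper $\D$-resolution $Q\qra N$ the complexes $\Hom{P}{Q}$ and $\Hom{Q}{P}$ coincide with their bounded Hom subcomplexes (since $P$ has only finitely many nonzero terms), so $\sHom{P}{Q}=0=\sHom{Q}{P}$, and therefore $\Cext[\D]{n}{M}{N}=0=\Cext[\D]{n}{N}{M}$ for every $n\in\ZZ$.

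The substantive step will be \eqclbl{iii}$\implies$\eqclbl{i}. I would fix a proper $\D$-resolution $P\qra M$ and use $\HH[0]{\sHom{P}{P}}=0$ to realize the class of $\mathrm{id}_P$ as a boundary in $\sHom{P}{P}^{0}$; this yields $\varphi\in\Hom{P}{P}^{-1}$ with $\mathrm{id}_P-\partial(\varphi)\in\bHom{P}{P}^{0}$. Since an element of the bounded Hom-complex has only finitely many nonzero components, for all sufficiently large $i$ one obtains the contracting-homotopy identity $\partial_{i+1}\varphi_i+\varphi_{i-1}\partial_i=\mathrm{id}_{P_i}$. Composing on the left with $\partial_i$ and using $\partial_i\partial_{i+1}=0$ gives $\partial_i\varphi_{i-1}\partial_i=\partial_i$, which exhibits the epimorphism $\partial_i\colon P_i\onto\Im{\partial_i}$ as split; because $\D$ is closed under direct summands and $P_i\in\D$, one concludes $\Im{\partial_i}\in\D$. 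By exactness of the resolution in positive degrees $\Im{\partial_i}=\Coker{(\partial_{i+1})}$, so Lemma \ref{Chen} will give $\D\text{-}\mathrm{pd}_\A M\le i$.

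The only real obstacle is bookkeeping under the dualization: the splitting produced by the contracting-homotopy identity---which in Theorem \ref{C-dimension} deposits a $\C$-cosyzygy $\Ker{(I^i\to I^{i+1})}$ inside $\C$---must here be seen to deposit the cokernel $\Coker{(P_{i+1}\to P_i)}$ inside $\D$, matching exactly the hypothesis in Lemma \ref{Chen}\eqclbl{iii}. Once the indexing is set up correctly, no substantive new idea beyond the proof of Theorem \ref{C-dimension} is required.
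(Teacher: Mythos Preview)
Your proposal is correct and follows exactly the approach the paper intends: the paper does not give a separate proof of this theorem but presents it as the dual of Theorem~\ref{C-dimension}, and your argument is precisely that dualization, with Lemma~\ref{Chen} playing the role of Lemma~\ref{Chen-dual}. The index bookkeeping you flag as the only obstacle is handled correctly---your identity $\partial_i\varphi_{i-1}\partial_i=\partial_i$ splits $\partial_i\colon P_i\twoheadrightarrow\Im\partial_i\cong\Coker(\partial_{i+1})$, matching Lemma~\ref{Chen}\eqclbl{iii} as required.
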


\begin{cor}
Suppose that $\D$ is closed under direct summands. Then for each object $N$ in $\A$ and $n\in\ZZ$, the following statements are equivalent.
  \begin{eqc}
  \item $\Ext[\D\A]{i}{M}{D}=0$ for each object $D\in\D$ and all $i \ge n$.
  \item $\bExt[\D]{i}{M}{-}=0$ for all $i \ge n$.
  \end{eqc}
\end{cor}

We end this section with a new computation of stable cohomology $\Text[\C]{n}{-}{-}$ (resp., $\Cext[\D]{n}{-}{-}$) via Tate $\C$-coresolutions (resp., Tate $\D$-resolutions).

\begin{bfhpg}[\bf Tate (co)resolutions]\label{tate resolution}
Recall from \cite{SSW-10a} that a complex $T$ of objects in $\C$ is \emph{totally $\C$-acyclic} if it is acyclic and the complex
$\Hom{C}{T}$ and $\Hom{T}{C}$ are acyclic for each object $C\in\C$. Let $N$ be an object in $\A$. A \emph{Tate $\C$-coresolution} of $N$ is a diagram $N\qra I\xra{\alpha}T$ wherein $T$ is a totally $\C$-acyclic complex of objects in $\C$, $N \qra I$ is a proper $\C$-coresolution of $N$, and $\alpha^{n}$ is an isomorphism for $n\gg0$.

Dually, one has the definitions of a \emph{totally $\D$-acyclic} complex $H$ and a \emph{Tate $\D$-resolution} $H\xra{\gamma} P\qra M$ of $M$.
\end{bfhpg}

The next result generalizes \cite[Theorem 7.3]{BN98}.

\begin{prp}\label{Tate}
Suppose that $\C$ is closed under direct summands. Let $N$ be an object in $\A$ that has a Tate $\C$-coresolution $N\qra I\xra{\alpha}T$. Then for each object $M$ in $\A$ and all $n\in\ZZ$, there is a natural isomorphism
$$\Text[\C]{n}{M}{N}\is \HH[n]{\Hom{M}{T}}.$$
\end{prp}
\begin{prf*}
Fix $n\in\ZZ$, and let $p\geq n$ such that $\alpha^i$ is an isomorphism for each $i\geq p$. Set $L=\mathrm{Ker}(\partial_{I}^{p})$ and $K=\mathrm{Ker}(\partial_{T}^{n-1})$. Since the complex $T$ is $\Hom{-}{\C}$-exact, $K\qra\mathsf{\Sigma}^{n-1}T^{\geq n-1}$ is a proper $\C$-coresolution of $K$. Thus for each object $C\in\C$ and $i\geq 1$, one has
  \begin{align*}
    \Ext[\A\C]{i}{C}{K}
    &=\HH[i]{\Hom{C}{\mathsf{\Sigma}^{n-1}T^{\geq n-1}}}\\
    &=\HH[i+n-1]{\Hom{C}{T^{\geq n-1}}}\\
    &=\HH[i+n-1]{\Hom{C}{T}}\\
    &=0\:,
  \end{align*}
where the last equality holds as $T$ is $\Hom{\C}{-}$-exact. This yields $\bExt[\C]{i}{M}{K}=0$ for all $i\geq1$ by Corollary \ref{unbounded}. Thus from Proposition \ref{stable exact sequence} one gets $\Text[\C]{1}{M}{K}\is\Ext[\A\C]{1}{M}{K}$. This is the third isomorphism in the next computation
  \begin{align*}
    \Text[\C]{n}{M}{N}
    &\dis \Text[\C]{n-p}{M}{L}\\
    &\dis \Text[\C]{1}{M}{K}\\
    &\dis \Ext[\A\C]{1}{M}{K}\\
    &\dis \HH[1]{\Hom{M}{\mathsf{\Sigma}^{n-1}T^{\geq n-1}}}\\
    &\deq \HH[n]{\Hom{M}{T^{\geq n-1}}}\\
    &\deq \HH[n]{\Hom{M}{T}}\:.
  \end{align*}
The first two isomorphisms hold by Proposition \ref{exact1} and Theorem \ref{C-dimension}; the fourth one holds as $K\qra\mathsf{\Sigma}^{n-1}T^{\geq n-1}$ is a proper $\C$-coresolution of $K$.
\end{prf*}

Dually, we have the following result.

\begin{prp}\label{co-Tate}
Suppose that $\D$ is closed under direct summands. Let $M$ be an object in $\A$ that has a Tate $\D$-resolution $H\to P\qra M$. Then for each object $N$ in $\A$ and all $n\in\ZZ$, there is a natural isomorphism
$$\Cext[\D]{n}{M}{N}\is \HH[n]{\Hom{H}{N}}.$$
\end{prp}

\begin{rmk}
We will study the relation between $\Text[\C]{n}{M}{N}$ and $\Cext[\D]{n}{M}{N}$ when $(\D,\C)$ is a balanced pair, and give a balance result in \cite{GLY23}; see \ref{balanced pairs} for the definition of balanced pairs.
\end{rmk}

\section{Applications of the vanishing results}
\noindent
Let $\A$ be the category $R$-Mod of left $R$-modules, where $R$ is an associative ring. In this section, we give some applications of the vanishing results showed in the above section, and give some characterizations of modules of finite homological dimension including the flat dimension, cotorsion dimension, Gorenstein injective (flat) dimension and projectively coresolved Gorenstein flat dimension.

\begin{bfhpg}[\bf Gorenstein injective objects]
Recall from Enochs and Jenda \cite{EJ95} that a left $R$-module $M$ is called {\it Gorenstein injective} if there is a $\Hom[R]{\sf Inj}{-}$-exact acyclic complex
$\cdots\to E_1\to E_0\to E^0\to E^1\to \cdots$
of injective left $R$-modules such that $M\is\mathrm{Ker}(E^0\to E^1)$, where $\sf Inj$ is the subcategory of injective left $R$-modules. Let $\C=\sf{GInj}$ be the subcategory of Gorenstein injective left $R$-modules. It was proved by \v{S}aroch and \v{S}t'ov\'{\i}\v{c}ek that $\C$ is a special preenveloping subcategory; see \cite[Theorem 5.6]{SS20}. In this case, the $\C$-injective dimension $\C$-$\mathrm{id}N$ of a left $R$-module $N$ is actually the Gorenstein injective dimension $\mathrm{Gid}_{R}N$.
\end{bfhpg}

The next result gives a characterization of modules of finite Gorenstein injective dimension; it follows from Theorem \ref{C-dimension}.

\begin{prp}
The following are equivalent for a left $R$-module $N$.
\begin{eqc}
\item $\mathrm{Gid}_{R}$$N < \infty $.
\item $\Text[\sf{GInj}]{n}{N}{-}=0=\Text[\sf{GInj}]{n}{-}{N}$ for all $n\in\ZZ$.
\item $\Text[\sf{GInj}]{0}{N}{N}=0$.
\end{eqc}
\end{prp}

\begin{bfhpg}[\bf Flat and cotorsion objects]
A left $R$-module $M$ is called {\it cotorsion} (see Enochs \cite{EE84}) if $\Ext[R]{1}{F}{M}=0$ for each flat left $R$-module $F$. Let $\C=\sf Cot$ be the subcategory of cotorsion left $R$-modules and $\D=\sf Flat$ the subcategory of flat left $R$-modules. Then $\C$ is a special preenveloping subcategory and $\D$ is a special precovering subcategory; see \cite[Proposition 2]{BBE01}. In this case, the $\C$-injective dimension $\C$-$\mathrm{id}N$ of a left $R$-module $N$ is actually the cotorsion dimension $\mathrm{cd}_{R}N$; see Mao and Ding \cite{MD06}. Meanwhile, the $\D$-projective dimension $\D$-$\mathrm{pd}M$ of a left $R$-module $M$ is actually the flat dimension $\mathrm{fd}_{R}M$.
\end{bfhpg}

We have the following two results by applying Theorem \ref{C-dimension} and Theorem \ref{D-dimension} respectively.

\begin{prp}
The following are equivalent for a left $R$-module $N$.
\begin{eqc}
\item $\mathrm{cd}_{R}N <\infty$.
\item $\Text[\sf Cot]{n}{N}{-}=0=\Text[\sf Cot]{n}{-}{N}$ for all $n\in\ZZ$.
\item $\Text[\sf Cot]{0}{N}{N}=0$.
\end{eqc}
\end{prp}

\begin{prp}
The following are equivalent for a left $R$-module $M$.
\begin{eqc}
\item $\mathrm{fd}_{R}M < \infty $.
\item $\Cext[\sf Flat]{n}{-}{M}=0=\Cext[\sf Flat]{n}{M}{-}$ for all $n\in\ZZ$.
\item $\Cext[\sf Flat]{0}{M}{M}=0$.
\end{eqc}
\end{prp}

\begin{bfhpg}[\bf Gorenstein flat modules]
Recall from \cite{EJ20} that a left $R$-module $M$ is called {\it Gorenstein flat} if there exists an acyclic complex
$\cdots\to F_1\to F_0\to F^0\to F^1\to \cdots$
of flat left $R$-modules with $M\is\mathrm{Ker}(F^0\to F^1)$ such that it remains exact after applying the functor $E\otimes_{R}-$ for each injective right $R$-module $E$. Let $\D=\sf{GFlat}$ be the subcategory of Gorenstein flat left $R$-modules. Then $\D$ is a special precovering subcategory by \cite[Corollary 4.12]{SS20}, and the $\D$-projective dimension $\D$-$\mathrm{pd}M$ of a left $R$-module $M$ is actually the Gorenstein flat dimension $\mathrm{Gfd}_{R}M$.
\end{bfhpg}

The next result is immediate by Theorem \ref{D-dimension}.

\begin{prp}
The following are equivalent for a left $R$-module $M$.
\begin{eqc}
\item $\mathrm{Gfd}_{R}M<\infty$.
\item $\Cext[\sf{GFlat}]{n}{-}{M}=0=\Cext[\sf{GFlat}]{n}{M}{-}$ for all $n\in\ZZ$.
\item $\Cext[\sf{GFlat}]{0}{M}{M}=0$.
\end{eqc}
\end{prp}

\begin{bfhpg}[\bf Projectively coresolving Gorenstein flat modules]
Recall from \cite{SS20} that a left $R$-module $M$ is called {\it projectively coresolved Gorenstein flat} if there exists an acyclic complex
$\cdots\to P_1\to P_0\to P^0\to P^1\to \cdots$
of projective left $R$-modules with $M\is\mathrm{Ker}(P^0\to P^1)$ such that it remains exact after applying the functor $E\otimes_{R}-$ for each injective right $R$-module $E$. Let $\D=\sf{PGF}$ be the subcategory of projectively coresolved Gorenstein flat left $R$-modules. Then $\D$ is a special precovering subcategory by \cite[Theorem 4.9]{SS20}, and the $\D$-projective dimension $\D$-$\mathrm{pd}M$ of a left $R$-module $M$ is actually the projectively coresolved Gorenstein flat dimension ${\rm PGF}{\text-}{\rm dim}_{R}M$.
\end{bfhpg}

The next result is immediate by Theorem \ref{D-dimension}.

\begin{prp}
The following are equivalent for a left $R$-module $M$.
\begin{eqc}
\item ${\rm PGF}{\text-}{\rm dim}_{R}M<\infty$.
\item $\Cext[\sf{PGF}]{n}{-}{M}=0=\Cext[\sf{PGF}]{n}{M}{-}$ for all $n\in\ZZ$.
\item $\Cext[\sf{PGF}]{0}{M}{M}=0$.
\end{eqc}
\end{prp}

\section*{Acknowledgments}
\noindent
The authors would like to express sincere thanks to the referee for valuable
comments, suggestions and corrections which resulted in a significant improvement of the paper. The authors also thank Xiaoyan Yang for helpful discussions related to this work. This research was partly supported by NSF of China (Grant Nos. 12271230 and 11971388) and NSF of Gansu Province (Grant No. 22JR5RA375).




\bibliographystyle{amsplain-nodash}

\end{document}